\theoremstyle{plain}
\newtheorem{thm}{Theorem}[section]
\newtheorem{theorem}{Theorem}[section]
\newtheorem{corollary}[thm]{Corollary}
\newtheorem{lemma}[thm]{Lemma}
\theoremstyle{definition}
\newtheorem{defn}{Definition}[section]
\newtheorem{remark}[defn]{Remark}
\newtheorem{example}[defn]{Example}
\theoremstyle{remark}
\numberwithin{equation}{section}
\numberwithin{figure}{section}
\DeclareMathOperator*{\res}{\mathrm{Res}}
\def\I{\mathrm{i}}
\def\D{{\mathbb D}}
\def\C{{\mathbb C}}
\def\P{{\mathbb P}}
\def\Z{{\mathbb Z}}
\def\N{{\mathbb N}}
\def\ol{\overline}
\begin{document}

\title{A field theoretic operator model and Cowen-Douglas class}

\author{
Bj\"orn Gustafsson\textsuperscript{1},
Mihai Putinar\textsuperscript{2}\\
}

\maketitle

\begin{abstract}
In resonance to a recent geometric framework proposed by Douglas and Yang, a functional model for certain linear bounded operators 
with rank-one self-commutator acting on a Hilbert space is developed. By taking advantage of the refined existing theory of the 
principal function of a hyponormal operator we transfer the whole action outside the spectrum, on the resolvent of the 
underlying operator, localized at a distinguished vector. The whole construction turns out to rely on an elementary algebra body 
involving
analytic multipliers and Cauchy transforms. A natural field theory interpretation of the resulting resolvent functional model is proposed.

\end{abstract}

\bigskip
\begin{center}
Ron Douglas, in memoriam
\end{center}
\bigskip

\noindent {\it Keywords:} Hyponormal operator, exponential transform, Cauchy transform, ideal fluid flow.

\noindent {\it MSC Classification:} Primary: 47B20; Secondary: 30A31,76C05.

 \footnotetext[1]
{Department of Mathematics, KTH, 100 44, Stockholm, Sweden.\\
Email: \tt{gbjorn@kth.se}}
\footnotetext[2]
{Department of Mathematics, UCSB, Santa Barbara, CA 93106-3080, USA and School of Mathematics, Statistics and Physics,
Newcastle University, Newcastle upon Tyne, NE1 7RU, UK.\\
Email: \tt{mputinar@math.ucsb.edu, mihai.putinar@ncl.ac.uk}}

%%%%%%%%%%%%%%%%%%%%%%%%%%%%%%%%%%%%%%%%%%%%%%%%%%%%%%%

\section{Introduction}\label{sec:intro}

Functional models open deep insights into classification, spectral analysis and refined structure of special classes of linear operators.
Among the most natural examples are: the spectral theorem for self-adjoint or unitary transformations, Rota's model, Livsic' model for dissipative operators, Sz.-Nagy and Foia\c{s} model for contractive operators,
singular integral models for close to normal operators, the sheaf model of decomposable operators. The starting point in all cases is the ``diagonalization'' of a linear and bounded operator 
${\rm T}$ acting on a Hilbert space $H$, via its resolvent:
$$ 
{\rm T} ({\rm T}-z)^{-1}x = z ({\rm T}-z)^{-1}x + x, \ \ x \in H.
$$
Thus the action of ${\rm T}$ becomes multiplication by the complex variable (a much more standard operation), in a properly chosen function space, modulo some canonical quotient. 
One cannot underestimate this very simple idea, consult for instance the survey \cite{Fuhrmann-1990}.

One step further, classifying operators via the local behavior of their (left, or right) resolvent, is an equally natural step to take. In a groundbreaking work Cowen and Douglas \cite{Cowen-Douglas-1978} proposed the following classification scheme.
Assume that the spectrum of ${\rm T}$ contains an open set $\Omega$, and that for every $\lambda \in \Omega$, the operator ${\rm T}-\lambda= {\rm T}-\lambda {\rm I}$ 
is surjective and its kernel has constant dimension $d$. Moreover,
assume that the linear span of $\ker ({\rm T}-\lambda)$, $\lambda \in \Omega,$ spans the entire Hilbert space $H$. Then curvature type invariants of the complex hermitian vector bundle 
$\ker ({\rm T}-\lambda)$, $\lambda \in \Omega,$ form a complete set of unitary invariants for ${\rm T}$. This ingenious bridge between operator theory and complex hermitian geometry  
produced profound results and continues to attract a lot of attention. See \cite{Douglas-Yang-2016} for some recent developments.

The typical example in Cowen and Douglas (in short CD) class is the adjoint ${\rm S}^\ast$ of the unilateral shift operator ${\rm S} \in \mathcal{L}(\ell^2(\N))$. On an orthonormal basis 
$(e_n)_{n=0}^\infty$ the shift ${\rm S}$ acts by translation
${\rm S}e_n = e_{n+1}, \ n \geq 0.$ One easily verifies that ${\rm S}^\ast$ is in Cowen and Douglas class with $\Omega$ equal the unit disk and multiplicity $d$ equal to $1$. As a matter of fact, adjoints of multiplication by the complex variable $z$ on 
``natural'' Hilbert spaces of analytic functions in a domain $\Omega$ possess the same CD property.

But one can turn the page and go beyond spaces of analytic functions. A notable class is offered by singular integral operators with a Cauchy kernel type.  More precisely, consider on Lebesgue space
$L^2([0,1], dt)$ a linear operator of the form
$$ 
({\rm T}f)(x) = xf(x) + a(x) \int_0^1 \frac{ a(t) f(t)}{t-x} dt
$$
with a bounded, measurable and real valued multiplier $a$. Since Hilbert transform is bounded on Lebesgue space, we identify the real and imaginary parts of ${\rm T}={\rm X}+\I{\rm Y}$ as
$$ 
({\rm X}f)(x) = xf(x),  \ \ \ ({\rm Y}f)(x) = -\I a(x) \int_0^1 \frac{ a(t) f(t)}{t-x} dt.
$$ 
The notable feature in this example is the rank-one non-negative commutator:
$$ 
[{\rm T},{\rm T}^\ast] = [{\rm X}+\I{\rm Y}, {\rm X}-\I{\rm Y}] = -2\I [{\rm X},{\rm Y}] = 2 a \langle \cdot, a\rangle.
$$
While this is not Heisenberg canonical commutation relation in quantum mechanics, it is close, in the sense that one can realize with linear and bounded operators the commutator inequality
$[{\rm T},{\rm T}^\ast] \geq 0$. And the comparison does not stop here. Such operators are called co-hyponormal and they display some remarkable spectral behavior. 

If the function $a$ above is in addition continuous, one proves with specific techniques that ${\rm T}$ belongs to Cowen Douglas class, with multiplicity $d=1$. This observation goes back several decades ago to Clancey and Wadhwa \cite{Clancey-Wadhwa-1983}.

The aim of the present article is to adopt from hyponormal operator theory some basic constructions, solely relying on Cauchy integrals of measures, for constructing a field theoretic model to a class of operators sharing the Cowen Douglas property. We emphasise the nature and novelty of the underlying Hilbert space structure where the operators naturally act. The note is a direct continuation of our recent Lecture Notes \cite{Gustafsson-Putinar-2017}.

An informal overview of our computations is the following: we populate the non-essential part of the spectrum with vortices, sources and sinks and we transfer the action of the underlying operator outside the spectrum, on the velocity fields generated by these singularities. Although in general fields do not faithfully locate their sources, our operation is not loosing information. This trend resonates to the ``remote sensing'' concept.

%%%%%%%%%%%%%%%%%%%%%%%%%%%%
  
\section{Preliminaries}\label{sec:preliminaries} 

A linear bounded operator ${\rm S} \in \mathcal{L}(H)$ acting on Hilbert space $H$ is called {\it hyponormal} if the self-commutator ${\rm D} = [{\rm S}^\ast,{\rm S}]$ is non-negative, as an operator. The vectors in the range of
$\sqrt{{\rm D}}$ are special, due to the fact, verified by elementary means, that the resolvent 
$$ 
\rho_h(z) = ({\rm S}^\ast - \bar{z})^{-1} \sqrt{{\rm D}} h
$$ 
admits a bounded measurable extension across the spectrum of ${\rm S}$. Indeed, fix $z \in \C$ and a vector $h$. Since
$$ \| ({\rm S}-z)h \|^2 - \|({\rm S}^\ast - \bar{z})h\|^2 = \| \sqrt{\rm D}h \|^2,$$
we infer there exists a contractive operator ${\rm T}(z)$ satisfying
$$ \sqrt{\rm D} = {\rm T}(z) ({\rm S}-z).$$
To resolve any ambiguity, we may impose the condition 
$$ {\rm T}(z) {\rm ker}({\rm S}^\ast - \bar{z}) = 0.$$
By taking adjoints we find
$$ ({\rm S}^\ast - \bar{z}) {\rm T}(z)^\ast = \sqrt{\rm D},$$
and we call ${\rm T}(z)^\ast h = ({\rm S}^\ast - \bar{z})^{-1} \sqrt{{\rm D}}h$
the localized resolvent of ${\rm S}$, at the vector $ \sqrt{{\rm D}}h$.

Consequently, the continuous Gram matrix type kernel
$\langle \rho_h(z), \rho_k(w) \rangle$, $z,w \in \C$, $h,k \in {\rm Ran} \sqrt{{\rm D}}$, determines the irreducible part of ${\rm S}$, that is the compression of ${\rm S}$ to the reducing space which contains the range of $\sqrt{{\rm D}}$. This is the well paved road of 
``diagonalizing''  ${\rm S}^\ast$ and obtaining a functional model. Henceforth we assume that the operator ${\rm S}$ is irreducible on $H$, which is equivalent to the fact that there is no normal direct summand in ${\rm S}$.

Indeed, for any analytic functions $f,g$ defined in a neighborhood of the spectrum $\sigma({\rm S})$, one finds by Riesz-Dunford calculus:
$$
\langle g({\rm S})^\ast \sqrt{{\rm D}}h, f({\rm S})^\ast \sqrt{{\rm D}} k \rangle = 
$$ 
$$
\frac{1}{4 \pi^2} \int_{\gamma \times \gamma} \langle \ol{g(z)}  ({\rm S}^\ast - \bar{z})^{-1} \sqrt{{\rm D}} h, \ol{f(w)}  ({\rm S}^\ast - \bar{w})^{-1} \sqrt{{\rm D}} k \rangle dw d\bar{z} = 
$$ 
$$
 \frac{1}{4 \pi^2} \int_{\gamma \times \gamma} \langle \rho_h(z), \rho_k(w) \rangle f(w) \ol{g(z)} dw d\bar{z},
$$
where $\gamma$ is a system of Jordan curves surrounding the spectrum of ${\rm S}$. Already this defines a non-negative inner product on the space of vector valued functions spanned by 
$f(z) \sqrt{{\rm D}}h$, and by reversing the above computations, one finds a correlation, continuous kernel $\langle \rho_h(z), \rho_k(w) \rangle$ which diagonalizes the operator ${\rm S}^\ast$. 

The above scheme is well known and widely used in operator theory, starting with Rota model \cite{Rota-1960} and not disjoint of Cowen and Douglas configuration. What is special in our setting is 
the measurability of the extensions of the resolvents across the spectrum. Hence we can play the distribution game and notice that the eigenvector equation
$$ 
({\rm S}^\ast - \bar{z}) \frac{\partial}{\partial z} \rho_h(z) = 0,
$$
makes perfect sense, in the weak sense. And moreover, the vector valued eigen-distributions $\frac{\partial}{\partial z} \rho_h(z)$ span the whole space, that is
the vectors $\frac{\partial}{\partial z} \rho_h(z)(\phi)$ where $\phi = \phi(z,\bar{z})$ runs among all test functions in $\C$, are dense in $H$. In principle, if we know enough many eigenvectors of an operator 
${\rm S}$ one can classify it up to unitary equivalence class, one can build a natural functional model which substitutes ${\rm S}$ by the multiplication by the variable, and so on. 

The simplest case of irreducible hyponormal operators with rank-one self-commutator
$$ 
[{\rm S}^\ast,{\rm S}] = \xi \langle \cdot, \xi \rangle
$$
allows some simplifications, it is way better understood and it makes the subject of our note. First of all the diagonalizing kernel 
$\langle \rho_h(z), \rho_k(w) \rangle$ is reduced in this case to a scalar correlation matrix type, still continuous:
$$ 
K_S(w,z) = K(w,z) = \langle  ({\rm S}^\ast - \bar{z})^{-1} \xi,  ({\rm S}^\ast - \bar{w})^{-1} \xi \rangle
$$ 
makes perfect sense for all values of $z, w \in \C$ and one proves that in fact that $K(w,z)$ is separately continuous, besides being hermitian. Moreover, as noted above, the sole eigen-distribution 
$\frac{\partial}{\partial z} ({\rm S}^\ast - \bar{z})^{-1} \xi$ spans the whole underlying Hilbert space. Note that this $H$-valued distribution is supported by 
the spectrum of ${\rm S}$. One proves that outside the essential spectrum of ${\rm S}$ the distribution $\frac{\partial}{\partial z} ({\rm S}^\ast - \bar{z})^{-1} \xi$ is an analytic frame in the hermitian line bundle
$\ker({\rm S}^\ast-\bar{z})$, thus meeting the heart of Cowen and Douglas' framework.

Once the spanning eigen-distribution is known, the natural functional model is straightforward: define on Schwartz space ${\mathcal S}(\C)$ the inner product
$$ 
(\psi, \phi) = \int_{\C \times \C} K(w,z)\frac{ \partial \ol{\phi}}{\partial z} \frac{ \partial \psi}{\partial \bar{w}} \,\,d A \otimes  d A,
$$
where $dA$ stands for Lebesgue area measure. That is
$$ 
(\phi,\psi) = \langle [\frac{\partial}{\partial z} ({\rm S}^\ast - \bar{z})^{-1} \xi] (\phi), [\frac{\partial}{\partial z} ({\rm S}^\ast - \bar{z})^{-1} \xi] (\psi) \rangle = \langle {\rm R}(\phi), {\rm R} (\psi) \rangle,
$$
where 
$$ 
{\rm R} : {\mathcal S}(\C) \longrightarrow H, \quad{\rm R} (\phi) = [\frac{\partial}{\partial z}({\rm S}^\ast - \bar{z})^{-1} \xi](\phi)
$$
is a continuous anti-linear map. The intertwining property
$$ 
{\rm R} (\bar{z} \phi) = {\rm S}^\ast {\rm R}(\phi)
$$
induces the unitary equivalence between the Hilbert space completion of ${\mathcal S}(\C)$ with respect to the norm induced by the inner product $(\cdot, \cdot)$ and $H$. In this unitary transformation the operator ${\rm S}^\ast$ becomes multiplication by $\bar{z}$.

Two remarks are in order. First, the operator ${\rm S}$ is not normal, hence it is not represented in the above model by the multiplication by $z$. A correction term involving Cauchy transforms appear, as it will be demonstrated 
in the present article. Second, the correlation/diagonalization kernel $K_S$ has a precise functional description involving again a (double) Cauchy transform and depending solely on a
complete unitary invariant of the operator ${\rm S}$, called the {\it principal function}. Without entering into details, which appear with complete proofs in \cite{Martin-Putinar-1989}, there is a bijective correspondence between irreducible 
hyponormal operators ${\rm S}$ with rank-one self-commutator and classes in $L^1(\C,dA)$ of measurable functions $\rho : \C \longrightarrow [0,1]$, established by the formula
$$ 
K_S(w,z) = \langle  ({\rm S}^\ast - \bar{z})^{-1} \xi,  ({\rm S}^\ast - \bar{w})^{-1} \xi \rangle =  
$$ 
$$
1- \exp[-\frac{1}{\pi}\int_\C \frac{\rho (\zeta)dA(\zeta)}{(\zeta-w)(\bar{\zeta}-\bar{z})}], \quad z,w\in\C.
$$
There is much to say, old and new, about the exponential transform appearing in the last expression, see for instance our recent Lecture Notes \cite{Gustafsson-Putinar-2017}.

The scope of this article is to enter into the refined structure of the functional space ${\mathcal S}(\C)$, endowed with the inner product $(\cdot, \cdot)$ and its Hilbertian completion, from a purely function theoretic perspective, touching field theory
interpretations. 

We will soon specialize to principal functions which are characteristic functions of bounded open sets $\Omega$, and sometimes with some boundary regularity. In that case, the corresponding operator ${\rm S}^\ast$
belongs to Cowen-Douglas class, with the set $\Omega$ carrying a hermitian line bundle of eigenvectors. In this respect, we provide in the rest of this note a novel, constructive approach to a non-standard collection of CD operators. On the other hand, the class of linear operators isolated by Cowen and Douglas is much larger, even when restricting only to rank one vector bundles of eigenvalues. Indeed, many cyclic subnormal operators share the Cowen-Douglas spanning property, while the only one which in addition is hyponormal of 
rank-one self-commutator is the unilateral shift, see \cite{Martin-Putinar-1989} for details.

%%%%%%%%%%%%%%%%%%%%%%%%%%%%%%%%%%%%%

\subsection{Notation}

We list below some non-standard notation appearing in the present article.
\begin{itemize}

\item $\mathcal{H}(\Omega)$:  a standard functional space for (co)hyponormal operators.

\item $\mathcal{O}(E)$: the set of germs of analytic functions on an arbitrary set $E\subset \P$, $\P$ denoting the Riemann sphere. 

\item $\mathcal{O}(\P\setminus \Omega)_0 = \{f\in\mathcal{O}(\P\setminus \Omega): f(\infty) =0)\}$ ($\Omega\subset\C$ a bounded domain).
\item $\overline{\mathcal{O}}(\P\setminus \Omega)_0 = $ completion of $\mathcal{O}(\P\setminus \Omega)_0$
with respect to relevant norm.
\item ${\rm C}[\mu](z)= -\frac{1}{\pi}\int\frac{d\mu(\zeta)}{\zeta-z}$, 
the Cauchy transform of a measure (or distribution) $\mu$ with ${\rm supp\,}\mu\subset\overline{\Omega}$. 

\item ${\rm C}^{\rm ext}[\mu]={\rm C}[\mu]\big|_{\P\setminus {\Omega}}$.

\item $S(z)$ is defined to be the function $S(z)=\bar{z}$ on $\partial\Omega$. If $\partial\Omega$ is analytic then $S(z)$ extends analytically
to a neighborhood of $\partial\Omega$ and is called the Schwarz function for $\partial\Omega$.  

\item If a function $f$ is defined on $\partial\Omega$, then its inner and outer Cauchy transforms $f_\pm$ are defined by (\ref{plusminus}).
In particular $S_\pm (z)$ are defined this way, and $S_-$ then equals the exterior Cauchy transform of the domain: 
$S_-={\rm C}^{\rm ext}[\chi_\Omega\, dA]$.

\end{itemize}

%%%%%%%%%%%%%%%%%%%%%%%%%%%%%%%%%%%%%%%%%%%%%%%%%%%%%%%

%%%%%%%%%%%%%%%%%%%%%%%%%%%%%%%%%%%%%%%%

\section{An analytic functional model based on Cauchy transforms}\label{sec:model}

\subsection{Definitions in terms of the exponential transform}

In the main body of our article, to start with this section, we turn our attention to the action outside the spectrum of a hyponormal operator ${\rm S}$ with rank-one self-commutator
on the generating resolvent, primitive of the eigen-distribution $\frac{\partial}{\partial z} ({\rm S}^\ast - \bar{z})^{-1} \xi$. Obviously this operation diagonalizes ${\rm S}^\ast$,
following the conventions of the preceding section, modulo constants of integration. Henceforth we closely follow this scenario, as we shall see with clear benefits for understanding
a versatile functional model. 
To conform with notations used in \cite{Gustafsson-Putinar-2017} we adopt ${\rm Z}$ for the prior ${\rm S}^\ast$, and we shall work exclusively in a functional model where this cohyponormal
operator ${\rm Z}$ becomes (after an additional conjugation of the complex plane) multiplication with the independent variable $z$. The vector $\xi$ becomes the function identically one,
denoted ${\bf 1}$.
%For clarity of exposition we change notation, adopting ${\rm Z}$ for the prior ${\rm S}^\ast$ by working exclusively with function theoretic concepts.
The above mentioned eigen-distribution then takes the form $\frac{\partial}{\partial \bar{z}} ({\rm Z} - {z})^{-1} {\bf 1}$,
which will be further transformed into $\frac{\partial}{\partial \bar{z}} ({\rm Z}_\mathcal{O} - {z})^{-1} {S_-}$ in the model developed in the present paper.

We start with the inner product
\begin{equation}\label{innerproduct0}
\langle f,g\rangle =\frac{1}{\pi^2}\int_\C\int_\C (1-E_\rho(z,w))
\frac{{\partial} {f}}{\partial \bar{z}}\, \overline{\frac{\partial{{g}}}{\partial \bar{w}}} \,dA(z) dA(w),
\end{equation}
defined initially for smooth functions $f$, $g$ with compact support in $\C$. Here $0\leq \rho\leq 1$ is the principal function for the underlying cohyponormal operator,
the support of  $\rho$ being the spectrum of the operator. The exponential transform of $\rho$ is
\begin{equation}\label{E}
E_\rho(z,w)=\exp[-\frac{1}{\pi}\int_\C \frac{\rho (\zeta)dA(\zeta)}{(\zeta-z)(\bar{\zeta}-\bar{w})}], \quad z,w\in\C.
\end{equation}

When $\rho$ is the characteristic function of a domain, say $\rho=\chi_\Omega$, the inner product (\ref{innerproduct0}) reduces, upon partial integration, to an
integral over just $\Omega$ and with respect to the kernel (inner exponential transform)
\begin{equation}\label{HE}
H(z,w)=-\frac{\partial^2 E(z,w)}{\partial \bar{z}\partial w}, \quad z,w\in\Omega,
\end{equation}
which is analytic in $z$, anti-analytic in $w$. (The right member in (\ref{HE}) vanishes outside ${\Omega}\times{\Omega}$). 
After completion and passing to a quotient space (there will be many functions with zero norm)
there arises a Hilbert space $\mathcal{H}(\Omega)$. The objects in this space are equivalence classes of distributions (or possibly even more general objects), 
which  may have support reaching out to the boundary of $\Omega$. See \cite{Gustafsson-Putinar-2017} for some details and further references. 
The space $\mathcal{H}(\Omega)$ turns out to be very strange when considered as a function space and it
is  desirable to have another model with better properties in this respect, for example a space consisting of analytic functions and for which
there are continuous point evaluations. 
In this paper we introduce such an analytic model. The model reminds of Rota's model \cite{Rota-1960}, and in the case of hyponormal operators the Pincus-Xia-Xia analytic model \cite{Pincus-Xia-Xia-1984}; however, we deal in the sequel with analytic functions defined on the complement of the spectrum of the underlying operator.

Very briefly, the new space consists of Cauchy transforms in $\P\setminus\Omega$ of objects in $\mathcal{H}(\Omega)$.
Said in another way, one may start from (\ref{innerproduct0}) and consider it as a product between primitives $\tilde{f},\tilde{g}$ (with respect to $\bar{z}$)
of the functions $f$, $g$ involved, i.e. $f=\frac{\partial\tilde{f}}{\partial\bar{z}}$, $g=\frac{\partial\tilde{g}}{\partial\bar{z}}$. Specifically, these are to be taken to be the respective Cauchy transforms:
$\tilde{f}={\rm C}[f]$, $\tilde{g}={\rm C}[g]$. Then, with $\rho=\chi_\Omega$ and writing here the original inner product as $\langle \cdot, \cdot\rangle_{\mathcal{H}(\Omega)}$ for clarity, we have
\begin{equation*}\label{innerproduct3}
\langle \frac{{\partial} \tilde{f}}{\partial \bar{z}}\,, {\frac{\partial{\tilde{g}}}{\partial \bar{w}}}\rangle_{\mathcal{H}(\Omega)}=
\frac{1}{\pi^2}\int_\C\int_\C (1-E(z,w))
\frac{{\partial^2} \tilde{f}}{\partial \bar{z}^2}\, \overline{\frac{\partial^2{\tilde{g}}}{\partial \bar{w}^2}} \,dA(z) dA(w)=
\end{equation*}
$$
=\frac{1}{\pi^2}\int_\Omega\int_\Omega H(z,w)\frac{{\partial} \tilde{f}}{\partial \bar{z}}\, \overline{\frac{\partial{\tilde{g}}}{\partial \bar{w}}} \,dA(z) dA(w)=
$$
\begin{equation}\label{innerproduct1}=\frac{1}{4\pi^2}\int_{\partial\Omega}\int_{\partial\Omega} H(z,w) \tilde{f} (z)\, \overline{\tilde{g}(w)} \,dzd\bar{w}
=\langle \tilde{f},\tilde{g}\rangle_{\mathcal{O}}.
\end{equation}
Here the last equality is the definition of the new inner product $\langle \tilde{f}$, $\tilde{g}\rangle_{\mathcal{O}}$. In the final boundary integral $\partial\Omega$ has to be approximated from inside via
an exhaustion of subdomains of $\Omega$ because the kernel $H(z,w)$ has a singularity on the boundary. See Section~\ref{sec:boundary} for some details on this.

In general we do not assume much of regularity for $\partial\Omega$. Being it locally a Lipschitz graph will mostly be enough, even though, at some (specified) occasions, 
it will be useful to have it analytic. 

The definition of the original inner product, in $\mathcal{H}(\Omega)$,  immediately applies to distributions (even analytic functionals) with compact support (carrier) in $\Omega$, as
$$
\langle \mu,\nu\rangle=\frac{1}{\pi^2} (\mu\otimes\bar{\nu}) H,
$$
and the new inner product accordingly applies to all functions $\mathcal{O}(\P\setminus \Omega)_0$. 
By the above computation, the correspondence preserves the inner product: 
$$
\langle \mu,\nu\rangle=\langle  {\rm C}^{\rm ext}[\mu],  {\rm C}^{\rm ext}[\nu]\rangle_\mathcal{O}.
$$
Thus it gives rise to a unitary isomorphism between the completions of the spaces involved, in one direction given by the Cauchy transform:
$$
{\rm C}^{\rm ext}:\mathcal{H}(\Omega)\to\overline{\mathcal{O}}(\P\setminus \Omega)_0,
$$
$$
\mu \mapsto {\rm C}^{\rm ext}[\mu]={\rm C}[\mu]\big|_{\P\setminus\Omega}.
$$

As is implicit in the above discussion, it does not matter which object $\mu$ in a given equivalence class in the quotient space $\mathcal{H}(\Omega)$ is chosen
when taking the Cauchy transform.
This amounts to saying that since the inner product (\ref{innerproduct1}) only depends on the values of $\tilde{f}$ and $\tilde{g}$ on $\partial\Omega$, it does not matter
in which way these boundary functions are prolonged into $\Omega$.

In the other direction, if $f\in\mathcal{O}(\P\setminus \Omega)_0$ then it is possible to extend $f$ to a smooth function $\tilde{f}$ in $\P$ by modifying $f$ only in a compact subset of $\Omega$.
Then the Cauchy transform of ${\partial \tilde{f}}/{\partial\bar{z}}$ equals $\tilde{f}$, everywhere, in particular it agrees with $f$ outside $\Omega$.
The so obtained derivative ${\partial \tilde{f}}/{\partial\bar{z}}$ can be identified with an element in $\mathcal{H}(\Omega)$, as
$$
[\frac{\partial \tilde{f}}{\partial\bar{z}}]\in\mathcal{H}(\Omega),
$$
the bracket here denoting equivalent class. Passing to the completion  $\overline{\mathcal{O}}(\P\setminus {\Omega})_0$
this gives the isomorphism in the other direction:
$$
[{\frac{\partial}{\partial\bar{z}}}]: \overline{\mathcal{O}}(\P\setminus \Omega)_0\to \mathcal{H}(\Omega),
$$
$$
 f \mapsto [\frac{\partial \tilde{f}}{\partial\bar{z}}].
$$

It is easy to see that point evaluations are continuous functionals on $\mathcal{O}(\P\setminus {\Omega})_0$ with respect to the new norm. In fact, 
the corresponding reproducing kernel will be explicitly given in Section~\ref{sec:kernel}. Therefore
$$
\mathcal{O}(\P\setminus {\Omega})_0\subset \overline{\mathcal{O}}(\P\setminus \Omega)_0\subset {\mathcal{O}}(\P\setminus \overline{\Omega})_0,
$$
and $\overline{\mathcal{O}}(\P\setminus \Omega)_0$ becomes a true function space of analytic functions.

%%%%%%%%%%%%%%%%%%%%%%%%%%%%%%%%%%%%%

\subsection{Interior and exterior Cauchy transforms}\label{sec:intext}%\label{rem:notation}

When a measure $\mu$ has support on $\partial\Omega$ and is of the form 
$$
d\mu(z)=\frac{1}{2\I}f (z)dz,
$$ where $f$ is a continuous (say) function on $\partial\Omega$, then its Cauchy transform defines one analytic function in the exterior,
$f_-\in  {\mathcal{O}}(\P\setminus \overline{\Omega})_0$, and one in the interior, $f_+\in  {\mathcal{O}}({\Omega})$. More precisely we define $f_\pm$ so that
\begin{equation}\label{plusminus}
{\rm C}[\mu] (z)=-\frac{1}{2\pi \I}\int_{\partial\Omega}\frac{f(\zeta)d\zeta}{\zeta-z}=
\begin{cases}
\quad -f_+(z), \quad z\in {\Omega},\\ \quad\,\,\, f_-(z),\quad z\in  \P\setminus \overline{\Omega}.
\end{cases}
\end{equation} 
The jump over $\partial\Omega$, from $-f_+$ to $f_-$, equals $f_- -(-f_+)=f_++f_-$, which is $f$ itself, assuming that the boundary values exist. The distributional $\partial/\partial\bar{z}$
derivative of ${\rm C}[\mu]$ arising from this jump is of course just $d\mu=\frac{1}{2\I}fdz$, when this is regarded as a measure on $\partial\Omega$.

Quite often the situation will be that each of $f_+$ and $f_-$ have analytic continuations across $\partial\Omega$. Then $f=f_++f_-$ defines $f$ as an analytic function in a full neighborhood
of $\partial\Omega$. Assuming $0\in\Omega$, for simplicity of notation, the germs of $f_+$ and $f_-$ at the origin and infinity, respectively, can always be expanded in power series, which we write as
$$
f_+(z)=  \sum_{k< 0} \frac{a_k}{z^{k+1}}, \quad f_-(z)=\sum_{k\geq 0} \frac{a_k}{z^{k+1}}.
$$ 
This gives a  formal Laurent series representing $f$:
$$
f(z)= \sum_{k\in\Z} \frac{a_k}{z^{k+1}}.
$$
This series may, in good cases, converge in a neighborhood of $\partial\Omega$, and in 
exceptional cases even in all $\C\setminus\{0\}$, but we shall basically consider it as just a formal series. 
The series for $f_+$ and $f_-$ always converge in some neighborhoods of the origin and infinity, respectively, 
but note that the series at infinity does not determine all of $f_-$ in case $\Omega$ is multiply connected. 

An important  example of the above is obtained by taking $f(z)=\bar{z}$, i.e. $d\mu(z)=\frac{1}{2\I}\bar{z}dz$ on $\partial\Omega$. If this boundary is
analytic, then the function $f(z)$, when analytically extended, is the Schwarz function $S(z)$ for $\partial\Omega$, and the coefficients
are the interior and exterior harmonic moments of $\Omega$. We then write the series as
$$
S(z)=\sum_{k\in\Z}\frac{M_k}{z^{k+1}},
$$
with 
$$
S_+(z)=\sum_{k<0}\frac{M_k}{z^{k+1}}, \quad S_-(z)=\sum_{k\geq 0}\frac{M_k}{z^{k+1}},
$$
the $M_k$ being the harmonic moments, interior and exterior, of $\Omega$.

When $z\in \P\setminus{\overline{\Omega}}$, and with the same $\mu$ as above,
$$
{\rm C}[\mu](z)=-\frac{1}{2\pi \I}\int_{\partial\Omega}\frac{\bar{\zeta}d\zeta}{\zeta-z}=-\frac{1}{2\pi \I}\int_{\Omega}\frac{d\bar{\zeta}d\zeta}{\zeta-z}=
$$
\begin{equation}\label{CSM}
={\rm C}[{{\bf 1}}](z)=S_-(z)=\sum_{k\geq 0}\frac{M_k}{z^{k+1}},
\end{equation}
where the series gives the germ at infinity.  As a { matter of notation}, we shall use $S(z)$ to denote the function
$S(z)=\bar{z}$ on $\partial\Omega$ even when that boundary is not analytic. This means that on using the notation (\ref{plusminus}) we always have
$S_\pm(z)$ (or $\bar{z}_{\pm}$) defined as interior and exterior analytic functions, explicitly
\begin{equation}\label{Splusminus}
{\rm C}[\frac{1}{2\I}\bar{\zeta}d\zeta] (z)=-\frac{1}{2\pi \I}\int_{\partial\Omega}\frac{S(\zeta)d\zeta}{\zeta-z}=
\begin{cases}
\quad -S_+(z), \quad z\in {\Omega},\\ \quad\,\,\, S_-(z),\quad z\in  \P\setminus \overline{\Omega}.
\end{cases}
\end{equation} 
By (\ref{CSM}), $S_-={\rm C}^{\rm ext}[{\bf 1}]={\rm C}[{\bf 1}]\big|_{\P\setminus\overline{\Omega}}$, and we will use $S_-$ simply as a brief notation for this Cauchy transform, 
outside $\Omega$, of the characteristic function of $\Omega$.
Similarly, $S_+$ can be identified with a renormalized version (in $\Omega$) of the Cauchy transform of the characteristic function of $\P\setminus \Omega$. 

%%%%%%%%%%%%%%%%%%%%%%%%%%%%%%%%%%%%%%%%%%

%{\bf Comment added May 13: The simple rational functions $\frac{1}{z-a}, \ a \in \Omega$ are dense in ${\mathcal{O}}(\P\setminus \overline{\Omega})_0$ and their correlation kernel is exactly $H$:
%$$ \langle \frac{1}{z-a}, \frac{1}{z-b}\rangle = H(a,b), \ \ a,b \in \Omega.$$
%This corresponds to the inner product of Dirac masses at $a$ and $b$.

%To find the reproducing kernel we look at a linear combinations, better an integral of the form $ L_w (z) = \int \frac{R(b,w)}{z-b} db$ with a contour well chosen inside $\Omega$ ...
%The reproducing identity
%$$ \frac{1}{w-a} = \langle \frac{1}{z-a}, L_w(z) \rangle$$
%becomes
%$$ \frac{1}{w-a} = \int H(a,b)R(b,w) db,$$
%and now we have to choose a correct R... Smells of K\"othe-Grothendieck duality.
%}

%%%%%%%%%%%%%%%%%%%%%%%%%%%%%%%%%%%%%%%%%

\subsection{Operators}\label{sec:operators} 

The cohyponormal shift operator ${\rm Z}: \mathcal{H}(\Omega)\to \mathcal{H}(\Omega)$, multiplication by the independent variable $z$, corresponds in the Cauchy transform picture to the operator 
\begin{equation}\label{composition}
{\rm Z}_\mathcal{O}={\rm C}^{\rm ext}\circ{\rm Z}\circ [\frac{\partial}{\partial\bar{z}}]:\quad\overline{\mathcal{O}}(\P\setminus \Omega)_0\to \overline{\mathcal{O}}(\P\setminus \Omega)_0
\end{equation} 
given in terms of an arbitrary continuation $\tilde{f}$ of $f\in \overline{\mathcal{O}}(\P\setminus \Omega)_0$ to all of $\P$ by 
\begin{equation}\label{Z}
{\rm Z}_\mathcal{O}[f]= {\rm C}^{\rm ext}[z\frac{\partial\tilde{f}}{\partial\bar{z}}] ={\rm C}^{\rm ext}[\frac{\partial}{\partial{\bar{z}}}({z}\tilde{f}(z))].
\end{equation}
Since $z\tilde{f}(z)$ is already analytic outside $\Omega$, the effect of $\partial/\partial\bar{z}$ followed by ${\rm C}^{\rm ext}$ is just to adapt by an additive constant to the
normalization at infinity. Thus
\begin{equation}\label{ZCRes}
{\rm Z}_\mathcal{O}[f]={\rm C}^{\rm ext}[\frac{\partial}{\partial{\bar{z}}}({z}\tilde{f}(z))]= zf(z)+ \res_{\infty} f= (zf(z))_-\,\,,
\end{equation}
where the residue at infinity is, by definition, 
$$
\res_{\infty} f=\res_{\zeta=\infty} f(\zeta)d\zeta= -\frac{1}{2\pi\I} \oint_{|\zeta|=R>>1} f(\zeta) d\zeta.
$$

The adjoint  ${\rm Z}^*_\mathcal{O}$ is given by a formula which is a natural counterpart of (\ref{ZCRes}),
differing only by a conjugation. However, the proof of this is not entirely  trivial,  it depends on specific 
continuation properties of the exponential transform.  
One instance of this is that $H(z,w)$ extends holomorphically-\-antiholomorphically
to the entire Riemann sphere, in each variable separately, provided it is viewed as a section of a suitable holomorphic
line bundle (assuming for this statement that $\partial\Omega$ is analytic). This may be compared with somewhat analogues, but different, 
continuation properties for the Bergman and Szeg\"o  kernels:
both of these extend to the Schottky double of the domain, the Bergman kernel as a meromorphic section of the 
canonical bundle, and the Szeg\"o kernel as a meromorphic section of a square root of the canonical bundle.

These extension properties for the exponential transform have been discussed in detail in  \cite{Gustafsson-Putinar-2017},  \cite{Gustafsson-Putinar-2018}, 
but as we will need them in several forthcoming  proofs we compile the necessary identities also here.
For this we introduce a couple of new functions $G(z,w)$, $G^\ast(z,w)$, denote by $F(z,w)$ the restriction of $E(z,w)$ to the exterior, and recall $H(z,w)$. 
All of these functions are  analytic in $z$, anti-analytic in $w$, in their domains of definition, and we have 
\begin{align*}\label{G}
&F(z,w)=E(z,w),\quad &&z\in\C\setminus\ol{\Omega}, \quad w\in\C\setminus\ol{\Omega},\\
&G(z,w)=\frac{\partial E(z,w)}{\partial\bar z}=\frac{E(z,w)}{\bar z-\bar w}, \quad &&z\in\Omega, \,\, w\in\C\setminus\ol{\Omega},\\
&G^*(z,w)=\frac{\partial E(z,w)}{\partial w}=-\frac{E(z,w)}{ z- w},\quad && z\in\C\setminus\ol{\Omega}, \,\, w\in\Omega,\\
&H(z,w)=-\frac{\partial^2 E(z,w)}{\partial\bar z\partial w}=\frac{E(z,w)}{(z-w)(\bar{z}-\bar{w})},\quad && z\in\Omega, \,\, w\in\Omega.
\end{align*}
Here the first equality one each line can be taken as the definition of the quantity to the left.
Note that $G^*(z,w)=\overline{G(w,z)}$, and also that we have the asymptotics
\begin{equation}\label{Gasymptotics}
G(z,w)= -\frac{1}{\bar{w}} + \mathcal{O}{\frac{1}{|w|^2}}, \quad w\to \infty,
\end{equation}
similarly for $G^\ast(z,w)$.

The main point is that the matching properties implicit in the above relations determine the functions uniquely. 
For example, for fixed $z\in\Omega$ the two anti-analytic functions 
in $w$, $H(z,w)$ and $G(z,w)$, match by the Riemann-Hilbert type relation
\begin{equation}\label{HGRH}
H(z,w)(z-w)= G(z,w), \quad z\in \Omega, \,w\in \partial\Omega,
\end{equation}
which together with (\ref{Gasymptotics}) determines $H(z,w)$ and $G(z,w)$ uniquely. Similar statements apply to the relations
\begin{equation}\label{HzaG}
H(z,a)(\bar{z}-\bar{w})=-{G^ \ast(z,w)}, \quad z\in\partial\Omega, \,\, w\in {\Omega},
\end{equation}
\begin{equation}\label{EzwG}
G(z,w)(\bar{z}-\bar{w})={F(z,w)}, \quad\, z\in\partial\Omega, \,\, w\in \P\setminus \ol{\Omega}.
\end{equation}

If $\partial\Omega$ is analytic then all functions $F$, $G$, $G^\ast$, $H$ have analytic continuations across $\partial\Omega$, and  
(\ref{HGRH}) (for example) can be rewritten as
\begin{equation}\label{HG}
H(z,w)(z-\overline{S(w)})= G(z,w),  
\end{equation}
which then holds for $w$ in a full neighborhood of $\partial\Omega$. This relation expresses that, as functions of $w$, the pair $H(z,w)$, $G(z,w)$ makes 
up a global anti-holomorphic section of the line bundle defined by transition function $z-\ol{S(w)}$ (with $z\in\Omega$ kept fixed). 
Again, together with (\ref{Gasymptotics}) this determines $H(z,w)$, $G(z,w)$ uniquely,  as is immediate from  general Riemann surface theory.

All forthcoming theorems will be based solely on the above relations, the original definition of the exponential transform will no longer be needed. 

\begin{theorem}\label{thm:adjoint}
The adjoint  of ${\rm Z}_\mathcal{O}$ is given by 
\begin{equation}\label{adjoint}
{\rm Z}^*_\mathcal{O}[f]= {\rm C}^{\rm ext}[\frac{\partial}{\partial\bar{z}}(\bar{z}\tilde{f}(z))]=(\bar{z}f(z))_- 
\end{equation}
\end{theorem}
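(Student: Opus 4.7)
The plan is to verify directly that $\langle \mathrm{Z}_\mathcal{O} f, g\rangle_\mathcal{O} = \langle f, \mathrm{Z}_\mathcal{O}^\ast g\rangle_\mathcal{O}$ for $f, g$ in the dense subspace $\mathcal{O}(\P\setminus\Omega)_0$, working with the double boundary-integral expression (\ref{innerproduct1}). The first step is to unravel the boundary values. Since $zf(z)$ is analytic in $\P\setminus\overline{\Omega}$ with limit $-\res_\infty f$ at infinity, its Plemelj decomposition on $\partial\Omega$ gives $(zf)_- = zf + \res_\infty f$ and $(zf)_+ \equiv -\res_\infty f$ (a constant on $\Omega$); on the adjoint side, $(\bar w g)_- = \bar w g - (\bar w g)_+$ with $(\bar w g)_+$ defined as an interior Cauchy transform, hence analytic throughout $\Omega$. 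Substituting these decompositions, using $\overline{S(w)} = w$ on $\partial\Omega$, and collecting terms, the difference $\langle \mathrm{Z}_\mathcal{O} f, g\rangle_\mathcal{O} - \langle f, \mathrm{Z}_\mathcal{O}^\ast g\rangle_\mathcal{O}$ reduces to $\tfrac{1}{4\pi^2}\bigl(J_1 + (\res_\infty f)\,J_0 + J_3\bigr)$, where
\begin{align*}
J_0 &= \oint_{\partial\Omega}\oint_{\partial\Omega} H(z,w)\,\overline{g(w)}\, dz\, d\bar w,\\
J_1 &= \oint_{\partial\Omega}\oint_{\partial\Omega} H(z,w)(z-w)\,f(z)\,\overline{g(w)}\, dz\, d\bar w,\\
J_3 &= \oint_{\partial\Omega}\oint_{\partial\Omega} H(z,w)\,f(z)\,\overline{(\bar w g)_+(w)}\, dz\, d\bar w.
\end{align*}

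The next step is to show each of $J_0, J_1, J_3$ is zero by a Cauchy-theorem argument in either $\Omega$ or its exterior. For $J_0$, the map $z\mapsto H(z,w)$ is analytic in $\Omega$ for each fixed $w\in\Omega$, hence $\oint_{\partial\Omega} H(z,w)\,dz = 0$. For $J_3$, the integrand $H(z,w)\,\overline{(\bar w g)_+(w)}$ is anti-analytic in $w\in\Omega$ for each fixed $z$ (since $H$ is anti-analytic in $w$ and $(\bar w g)_+$ is analytic in $\Omega$), so $\oint F\,d\bar w = \overline{\oint \overline{F}\,dw} = 0$ by Cauchy applied to the analytic $\overline{F}$. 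The essential case is $J_1$: the Riemann--Hilbert identity (\ref{HGRH}) allows the replacement $H(z,w)(z-w) = G(z,w)$ on $\partial\Omega$, so the inner $w$-integral becomes $\oint G(z,w)\,\overline{g(w)}\,d\bar w$; after conjugation and the identity $\overline{G(z,w)}=G^\ast(w,z)$ this equals $\overline{\oint G^\ast(w,z)\,g(w)\,dw}$, and by (\ref{Gasymptotics}) together with $g(\infty)=0$ the integrand $G^\ast(w,z)\,g(w)$ is analytic in $\P\setminus\overline{\Omega}$ with $O(|w|^{-2})$ decay at infinity, so deforming the contour outward to $|w|=R$ shows the integral vanishes as $R\to\infty$.

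The main technical obstacle will be the boundary singularity of $H(z,w)$ on $\partial\Omega\times\partial\Omega$: the double integrals must be interpreted through an exhaustion $\partial\Omega'\uparrow\partial\Omega$ from inside $\Omega$, and the Riemann--Hilbert replacement and the Cauchy arguments have to be justified carefully in this limit. This is precisely where the continuation properties (\ref{HGRH})--(\ref{EzwG}) of $H, G, G^\ast, F$ are needed, matching the authors' warning that the proof relies on such properties. Once $J_0 = J_1 = J_3 = 0$ is established, the adjoint identity holds on the dense subspace $\mathcal{O}(\P\setminus\Omega)_0$ and extends by continuity to the whole of $\overline{\mathcal{O}}(\P\setminus\Omega)_0$.
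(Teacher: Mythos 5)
Your proposal is correct and follows essentially the same route as the paper's own proof: Plemelj decomposition into $\pm$ parts, Cauchy's theorem to discard the pieces analytic (resp.\ anti-analytic) in $\Omega$, the Riemann--Hilbert identity (\ref{HGRH}) to convert $H(z,w)(z-w)$ into $G(z,w)$, and the asymptotics (\ref{Gasymptotics}) to kill the remaining exterior contour integral; your terms $J_0$ and $J_3$ are exactly the paper's added $(z\tilde f)_+$ and $(\overline{\bar w\tilde g(w)})_+$ contributions. The only (minor) omission is the first equality in (\ref{adjoint}), namely that ${\rm C}^{\rm ext}[\frac{\partial}{\partial\bar z}(\bar z\tilde f)]=(\bar z f)_-$, which the paper dispatches with a one-line Green's-theorem computation before the adjointness argument.
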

In the middle member of (\ref{adjoint}) it is understood that the Cauchy transform is taken only of the restriction of $\frac{\partial}{\partial\bar{z}}(\bar{z}\tilde{f}(z))$ to
$\overline{\Omega}$. In the corresponding formula (\ref{Z}) this need not be said explicitly since $\frac{\partial}{\partial{\bar{z}}}({z}\tilde{f}(z))$ vanishes automatically
outside $\Omega$.

\begin{proof} 
We first confirm the second equality in (\ref{adjoint}) by the following computation, valid for $z\in\P\setminus\overline{\Omega}$:
$$
 {\rm C}^{\rm ext}[\frac{\partial}{\partial\bar{z}}(\bar{z}\tilde{f}(z))](z)= -\frac{1}{2\pi \I}\int_{\Omega}\frac{\frac{\partial}{{\partial\bar{\zeta}}}(\bar{\zeta} \tilde{f}(\zeta))d\bar{\zeta}d\zeta }{\zeta-z}=
$$
$$
= -\frac{1}{2\pi \I}\int_{\partial\Omega}\frac{\bar{\zeta} \tilde{f}(\zeta)d\zeta }{\zeta-z}=(\bar{z}f(z))_-
$$

It remains to prove that ${\rm Z}^*_\mathcal{O}[f]=(\bar{z}f(z))_- $. This follows on using  (\ref{HGRH}), (\ref{Gasymptotics})
together with the fact that any function of the form $\varphi_+$  is analytic in $\Omega$:
$$
\langle (z \tilde{f}(z))_-, \tilde{g}(z)\rangle_{\mathcal{O}}-\langle  \tilde{f}(z), (\bar{z}\tilde{g}(z))_-\rangle_{\mathcal{O}}=
$$
$$
=\frac{1}{4\pi^2}\int_{\partial\Omega}\int_{\partial\Omega} H(z,w)((z \tilde{f} (z))_-)\cdot \overline{\tilde{g}(w)} \,dzd\bar{w}-
$$
$$
-\frac{1}{4\pi^2}\int_{\partial\Omega}\int_{\partial\Omega} H(z,w) \tilde{f} (z)\cdot ((\overline{\bar{w}\tilde{g}(w)})_-) \,dzd\bar{w}=
$$
$$
=\frac{1}{4\pi^2}\int_{\partial\Omega}\int_{\partial\Omega} H(z,w)\left((z \tilde{f} (z))_+ +(z\tilde{f}(z))_-\right) \overline{\tilde{g}(w)} \,dzd\bar{w}-
$$
$$
-\frac{1}{4\pi^2}\int_{\partial\Omega}\int_{\partial\Omega} H(z,w) \tilde{f} (z)\, \left(\overline{\bar{w}\tilde{g}(w)})_+ +(\overline{\bar{w}\tilde{g}(w)})_- \right)dzd\bar{w}=
$$
$$
=\frac{1}{4\pi^2}\int_{\partial\Omega}\int_{\partial\Omega} H(z,w)(z-w) \tilde{f} (z)\, \overline{\tilde{g}(w)} \,dzd\bar{w}=
$$
$$
=\frac{1}{4\pi^2}\int_{\partial\Omega}\int_{\partial\Omega} G(z,w) \tilde{f} (z)\, \overline{\tilde{g}(w)} \,dzd\bar{w}=0.
$$
In the last step we used that $G(z,w)\overline{\tilde{g}(w)}$ is anti-analytic in $\P\setminus \Omega$ and vanishes as $|w|^{-2}$ when $|w|\to\infty$.

\end{proof}

The formula (\ref{adjoint}) can be understood in terms of the decomposition 
\begin{equation}\label{sum}
\frac{\partial}{\partial\bar{z}}(\bar{z}\tilde{f}(z))=\tilde{z}\frac{\partial\tilde{f}(\bar{z})}{\partial \bar{z}}+\tilde{f}(z),
\end{equation}
which represents a common way of writing the adjoint of ${\rm Z}$ (in the $\mathcal{H}(\Omega)$ picture) as a sum of a shift  by $\bar{z}$ and a Cauchy transform
(see \cite{Gustafsson-Putinar-2017}). However, it turns out that this decomposition cannot be pushed
down to the quotient space $\mathcal{H}(\Omega)$ because the balance between the terms depends on which elements are chosen in the equivalence classes.

As an example, let $\varphi$ be a smooth function with compact support in $\Omega$. Then $[\frac{\partial\varphi}{\partial\bar{z}}]=0$ as an element in $\mathcal{H}(\Omega)$,
while still $[\varphi]\ne 0$ for most choices of $\varphi$. The formula for ${\rm Z}^*_\mathcal{O}[f]$ involves $\tilde{f}$ in its role of representing $[\frac{\partial\tilde{f}}{\partial\bar{z}}]\in\mathcal{H}(\Omega)$. 
Now we have $[\frac{\partial\tilde{f}}{\partial\bar{z}}]=[\frac{\partial\tilde{f}}{\partial\bar{z}}]+[\frac{\partial\varphi}{\partial\bar{z}}]$, and if choose $\tilde{f}+\varphi$ in place of $\tilde{f}$
in (\ref{sum}) we get 
\begin{equation}\label{sum1}
\frac{\partial}{\partial\bar{z}}(\bar{z}\tilde{f}(z))+\frac{\partial}{\partial\bar{z}}(\bar{z}\varphi(z))=\tilde{z}\frac{\partial(\tilde{f}(\bar{z})+\varphi(z))}{\partial \bar{z}}+(\tilde{f}(z)+\varphi(z)).
\end{equation}
Here $\bar{z}\varphi(z)$ has compact support in $\Omega$, so the second term in the left member represents the zero element in $\mathcal{H}(\Omega)$, but in the right member there 
has been a shift of balance between the two terms, even as elements in $\mathcal{H}(\Omega)$.

The original operator ${\rm Z}$ on $\mathcal{H}(\Omega)$ was a shift operator, and also its counterpart ${\rm Z}_\mathcal{O}$ is a shift, now on the Laurent series: with
$$
f(z)=\sum_{k=0}^\infty\frac{a_k}{z^{k+1}}, \quad |z|>>1,
$$
we have 
\begin{equation}\label{Zstar}
{\rm Z}_\mathcal{O}[f](z)= (zf(z))_- =\sum_{k=0}^\infty\frac{a_{k+1}}{z^{k+1}},\\
\end{equation}
However, ${\rm Z}_\mathcal{O}^*$ is not the corresponding backward shift, it has a more complicated description.  See for example (\ref{SS}) below.

\begin{example}\label{ex:1}
The identity function ${\bf 1}=[1]\in\mathcal{H}(\Omega)$ plays an important role in the theory, starting with the observation that the commutator between ${\rm Z}$ and ${\rm Z}^*$ is a projection onto the
subspace spanned by ${\bf 1}$:
\begin{equation}\label{ZZ}
[{\rm Z}, {\rm Z}^*] ={\bf 1}\otimes {\bf 1}.
\end{equation}

In $ \overline{\mathcal{O}}(\P\setminus \Omega)_0$ this function ${\bf 1}$ is represented by its Cauchy transform, namely
$S_-={\rm C}^{\rm ext}[{\bf 1}]$, see (\ref{CSM}). The most natural extension of it to all of $\P$ is the full Cauchy transform
$$
{\rm C}[{\bf 1}](z) = -\frac{1}{\pi} \int_\Omega \frac{dA(\zeta)}{\zeta-z},
$$
which as in Section~\ref{sec:intext} spells out as 
\begin{equation}\label{CSzS}
{\rm C}[{\bf 1}](z) =
\begin{cases}
\bar{z}-S_+(z),\quad &z\in\Omega,\\
S_-(z),\quad &z\in\P\setminus \Omega.
\end{cases}
\end{equation}
Thus, applying (\ref{ZCRes}), ${\rm Z}{\bf 1}=[z]$ corresponds in $\overline{\mathcal{O}}(\P\setminus \Omega)_0$ to $({\rm Z}_\mathcal{O} S_-)(z)=(zS_-(z))_-$ .

The operator ${\bf 1}\otimes {\bf 1}$ on $\mathcal{H}(\Omega)$ becomes, as an operator on $\overline{\mathcal{O}}(\P\setminus \Omega)_0$,
$$
({S_-}\otimes {S_-})_\mathcal{O}: \overline{\mathcal{O}}(\P\setminus \Omega)_0\to \overline{\mathcal{O}}(\P\setminus \Omega)_0,
$$
given by
\begin{equation}\label{ZZstar}
({S_-}\otimes {S_-})_\mathcal{O}[f]=\langle f,S_-\rangle_\mathcal{O} \cdot S_-\,\,,
\end{equation}
and one may in a certain sense identify (or represent) it with the kernel $S_-(\zeta)S_-(z)$.
See Section~\ref{sec:commutator} for the proof.
\end{example}

\begin{example}\label{ex:zn}
Repeating the above example more generally for $[z^n]={\rm Z}^n {\bf 1}\in\mathcal{H}(\Omega)$, $n\geq 0$, gives
that it in $\overline{\mathcal{O}}(\P\setminus \Omega)_0$ is represented by 
$$
{\rm Z}_\mathcal{O}^n[S_-](z)=(z^nS_-(z))_-=(z^n S(z))_-
$$
with power series expansion at infinity given by
$$
(z^nS(z))_-= \frac{M_n}{z}+ \frac{M_{n+1}}{z^2}+  \frac{M_{n+2}}{z^3}+\cdots.
$$
Thus ${\rm Z}^n$ acts here as an $n$ step shift operator of the harmonic moments.
\end{example}

\begin{example}\label{ex:Zstar1}
Next we consider ${\rm Z}^*{\bf 1}\in\mathcal{H}(\Omega)$. Its counterpart in $\overline{\mathcal{O}}(\P\setminus \Omega)_0$
is ${\rm Z}_\mathcal{O}^*[S_-](z)$, and by Theorem~\ref{thm:adjoint} this is given by
\begin{equation}\label{ZFSS}
{\rm Z}_\mathcal{O}^*[S_-](z)=(\bar{z}S_-(z))_-=(S(z)S_-(z))_-\,\,.
\end{equation}
In terms of power series, ${\rm Z}_\mathcal{O}^*[S_-]$ is represented by the strictly negative powers in the product of the power series of $S(z)$
and of $S_-(z)$:
\begin{equation*}\label{SS0}
{\rm Z}_\mathcal{O}^*[S_-](z)=(S(z)S_-(z))_-    
=
\end{equation*}
\begin{equation}\label{SS}
=\left(\sum_{j\in \Z}\frac{M_j}{z^{{j+1}}}\cdot\sum_{k\geq 0}\frac{M_k}{z^{k+1}}\right)_- =\sum_{k\geq 0}(\sum_{j\geq 0}M_{k-j-1}M_j)\frac{1}{z^{k+1}}, 
\end{equation}
to compare with
$$
{\rm Z}_\mathcal{O}[S_-](z)=(zS_-(z))_-=\sum_{k\geq 0}\frac{M_{k+1}}{z^{k+1}}
$$
from Example~\ref{ex:zn}
\end{example}

\begin{example}\label{ex:cauchykernel}
The Cauchy kernel in $\mathcal{H}(\Omega)$,
$$
[\frac{1}{z-a}]=({\rm Z}-a)^{-1} {\bf 1},
$$
corresponds in the Cauchy transform picture to 
$$
({\rm Z_\mathcal{O}}-a)^{-1} [{S_-}](z) 
=-\frac{1}{2\pi \I} \int_\Omega \frac{d\bar{\zeta}d\zeta}{(\zeta-z)(\zeta-a)}
$$
$$
=-\frac{1}{2\pi \I(z-a)} \left(\int_{\Omega} \frac{d\bar{\zeta}d\zeta}{\zeta-z}- \int_{\Omega}\frac{d\bar{\zeta}d\zeta}{\zeta-a}\right)
=\frac{{\rm C}[{\bf 1}](z)-{\rm C}[{\bf 1}](a)}{z-a}.
$$
Here $z\in\C\setminus\ol{\Omega}$, but any $a\in\C$ is allowed. If $a\in\P\setminus\ol{\Omega}$ the above becomes
$({S_-(z)-S_-(a)})/({z-a})$, and more generally we have, for $f\in\ol{\mathcal{O}}(\P\setminus\Omega)_0$ and $z,a\in \P\setminus\ol{\Omega}$,
\begin{equation}\label{Za}
({\rm Z_\mathcal{O}}-a)^{-1} [{f}](z)=\frac{f(z)-f(a)}{z-a}.
\end{equation}
\end{example}

\begin{example}\label{ex.GH}
As a complement to the relation ${\rm C}^{\rm ext}[{\bf 1}] =S_-$ \,\,in Example~\ref{ex:1} we mention the following relations,
holding for $z\in\P\setminus \ol{\Omega}$:
\begin{align*}
{\rm C}^{\rm ext}[{G(\cdot, w)}](z)&=E(z,w)-1, \quad &&\text{for fixed }w\in\P\setminus \ol{\Omega},\\
{\rm C}^{\rm ext}[{H(\cdot, w)}](z)&=-G^{\ast}(z,w), \quad &&\text{for fixed }w\in{\Omega}.
\end{align*}
See Section~2.5 in \cite{Gustafsson-Putinar-2017} for the simple proofs.
\end{example}

%\begin{remark}\label{G} (CAN BE REMOVED)
%Another functional model, which much reminds of the present Cauchy transform model, is the one which transfers objects in $\mathcal{H}(\Omega)$
%to analytic (or rather anti-analytic) functions in the exterior by means of the $G$-kernel (the same $G$ as in (\ref{Gasymptotics}), (\ref{HG})):
%$$
%(Gf)(w)= \frac{1}{\pi}\int_\Omega G(z,w)f(z)dA(z), \quad w\in \P\setminus\overline{\Omega}.
%$$
%This is briefly discussed in Section~3.9 of \cite{Gustafsson-Putinar-2017}. The conjugated function $\overline{Gf}$ is analytic in $\P\setminus\overline{\Omega}$, 
%and in the case of the unit disk we have
%$$
%\overline{G(z,w)}=\frac{1}{z-w},
%$$
%so the two models essentially agree in that case.
%\end{remark}

%%%%%%%%%%%%%%%%%%%%%%%%%%%%%%%%%%%%%%%%%

\subsection{Boundary operations}\label{sec:boundary}

We return to the boundary version of the  inner product (\ref{innerproduct1}):
\begin{equation}\label{innerproduct2}
\langle f,g\rangle_\mathcal{O}= \frac{1}{4\pi^2} \int_{\partial{\Omega}} \int_{\partial{\Omega}} H(z,w)f(z)\overline{g(w)}dzd\bar{w}.
\end{equation}
%
%As mentioned this has to be understood as a limit when approximating $\partial\Omega$ from inside, which works well for $f,g\in\mathcal{O}(\P\setminus\Omega)_0$.
%If $\partial\Omega$ is analytic, then $H(z,w)$ has a meromorphic extension across the boundary, and there arises the possibility of approximating $\partial\Omega$
%from outside instead. However, this  gives a different result because $H(z,w)$ has a polar singularity of type $1/(z-\overline{S(w)})$
%on $\partial\Omega$. For example, $H(z,w)=1/({1-z\bar{w}})$ in the case of the unit disk. The difference between the two approaches amounts to a term
%\begin{equation*}\label{hardy}
%\frac{1}{2\pi}\int_{\partial\Omega} f(z)\overline{g(z)}\,\frac{|dz|}{G(z,z)},
%\end{equation*}
%coming from the residue at $z=\overline{S(w)}$. 
%This term is actually the main contribution to the inner product when $\partial\Omega$ is analytic because the remaining boundary integrals can then be pushed out 
%to lie in a compact subset of $\P\setminus \overline{\Omega}$. It follows that $\overline{\mathcal{O}}(\P\setminus\Omega)_0$
%is similar to a Hardy space for the exterior domain when $\partial\Omega$ is analytic. 
%See Section~3.7 in \cite{Gustafsson-Putinar-2017} for some closer analysis in a similar situation.  See also \cite{Pincus-Xia-Xia-1984}. 
%
%Returning to previous themes,
%
We have so far used $\tilde{f}$ to denote various ``soft'' extensions of a given  $f\in{O}(\P\setminus\Omega)_0$ to all of $\P$.
Now working on the boundary suggests a more drastic way: simply extend it by zero in $\Omega$:
\begin{equation}\label{fzero}
\tilde{f}(z)=
\begin{cases}
f(z), \quad &z\in \P\setminus\Omega,\\
0, \quad&z\in\Omega.
\end{cases}
\end{equation}
Then $\frac{\partial\tilde{f}}{\partial\bar{z}}=\mu$, where $\mu$ is the distribution supported by $\partial\Omega$ and given there by $d\mu(z)= \frac{1}{2\I}f(z)dz$,
as in Section~\ref{sec:intext}.

With $\tilde{f}$ chosen to vanish in $\Omega$ we have, choosing  (\ref{Z}) as an example, that $\frac{\partial}{\partial{\bar{z}}}({z}\tilde{f}(z))$ becomes the measure
$\frac{1}{2\I}zf(z)dz$ on $\partial\Omega$. Similarly for other cases. This
gives the following simple and well balanced expressions, where $z\in\P\setminus\overline{\Omega}$:
\begin{align}\label{cauchyshift}
{\rm Z}_\mathcal{O}[f](z)&= -\frac{1}{2\pi\I}\int_{\partial\Omega}\frac{\zeta f({\zeta})d\zeta}{\zeta-z},\\
{\rm Z}^*_\mathcal{O}[f](z)&= -\frac{1}{2\pi\I}\int_{\partial\Omega}\frac{\bar{\zeta} f({\zeta})d\zeta}{\zeta-z}.
\end{align}

The relationship between the adjoint pair ${\rm Z}_\mathcal{O}$ and ${\rm Z}^*_\mathcal{O}$ here appears very natural, with Cauchy integrals containing
multiplications with $\zeta$ and $\bar\zeta$, respectively. 
One might believe that this is something general, but it depends on specific
properties of the kernel $H(z,w)$. As can be seen from the final steps in the proof of
Theorem~\ref{thm:adjoint}, the kernel $H(z,w)$ is actually characterized (up to a constant factor) by the two operators defined by the right members
of (\ref{cauchyshift}) being adjoint to each other. 

We may now collect (from (\ref{Zstar}), (\ref{ZFSS}), (\ref{Za})), and somewhat extend, the functional calculus for ${\rm Z}_\mathcal{O}$ as follows:
\begin{align*}\label{Zaf}
{\rm Z}_\mathcal{O}[f](z)&=(zf(z))_- \,\,,\\
{\rm Z}^*_\mathcal{O}[f](z)&=(\bar{z}f(z))_-=(S(z)f(z))_-\,\,,\\
({\rm Z_\mathcal{O}}-a)^{-1} [f](z)&= -\frac{1}{2\pi\I}\int_{\partial\Omega}\frac{ f({\zeta})d\zeta}{(\zeta-z)(\zeta-a)}=\frac{f(z)-f(a)}{z-a},\\
({\rm Z^\ast_\mathcal{O}}-\bar{a})^{-1} [f](z)&= -\frac{1}{2\pi\I}\int_{\partial\Omega}\frac{ f({\zeta})d\zeta}{(\zeta-z)(\bar{\zeta}-\bar{a})}.
\end{align*}
Here $f\in\ol{\mathcal{O}}(\P\setminus\Omega)_0$ and $z,a\in \P\setminus\ol{\Omega}$.

%%%%%%%%%%%%%%%%%%%%%%%%%%%%%%%%%%%%%%%%

\subsection{The commutator}\label{sec:commutator}

The aim of the present section is to confirm formula (\ref{ZZstar}) for the commutator.  To this aim we prove a few preparatory results.

\begin{lemma}\label{lem:intHS}
For any fixed $a\in\Omega$ and $w\in \P\setminus\ol{\Omega}$ there holds
\begin{equation}\label{intHS} 
\frac{1}{2\pi \I}\int_{\partial\Omega} H(z,a)S_-(z)dz =1,
\end{equation}
\begin{equation}\label{intHE} 
\frac{1}{2\pi \I}\int_{\partial\Omega}\frac{H(z,a)dz}{E(z,w)}=\frac{1}{\bar{w}-\bar{a}}.
\end{equation}
\end{lemma}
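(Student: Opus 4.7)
The plan for both identities is to exploit the Riemann--Hilbert matching relations \eqref{HGRH}--\eqref{EzwG} to reduce the boundary integrals to Cauchy's theorem and residue computations.

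For \eqref{intHS}, I would first decompose $S_-(z) = \bar z - S_+(z)$ on $\partial\Omega$ as in Section~\ref{sec:intext}. Since $H(z,a)$ and $S_+(z)$ are both analytic in $z \in \Omega$, the $S_+$ contribution integrates to zero by Cauchy's theorem, leaving $\frac{1}{2\pi\I}\int_{\partial\Omega}H(z,a)\bar z\,dz$. The matching relation \eqref{HzaG} then gives $H(z,a)\bar z = \bar a H(z,a) - G^*(z,a)$ on $\partial\Omega$; the $\bar a H(z,a)$ term again integrates to zero by analyticity, and pushing the $G^*$ contour out to $|z|=R\to\infty$ (noting that $G^*(\cdot,a)$ is analytic on $\P\setminus\overline\Omega$ with asymptotic $G^*(z,a)=-1/z+O(1/|z|^2)$ from \eqref{Gasymptotics}) produces $\int_{\partial\Omega}G^*(z,a)\,dz=-2\pi\I$, yielding the value $1$.

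For \eqref{intHE}, I would apply the same RH substitution $H(z,a)=-G^*(z,a)/(\bar z-\bar a)$ together with the outside formula $G^*(z,a)=-E(z,a)/(z-a)$, so that, on $\partial\Omega$,
\[
\frac{H(z,a)}{E(z,w)} \;=\; \frac{E(z,a)}{(z-a)(\bar z-\bar a)\,E(z,w)}.
\]
Replacing $\bar z$ by the Schwarz function $S(z)$ on $\partial\Omega$ and assuming $\partial\Omega$ analytic, the integrand extends meromorphically to a two-sided neighborhood of $\partial\Omega$. The extension $E(z,a)=H(z,a)(z-a)(S(z)-\bar a)$ cancels the denominator, and the integrand continues into $\Omega$ simply as $H(z,a)/E(z,w)$ interpreted via these analytic continuations. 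Its only singularity in the one-sided neighborhood extending into $\Omega$ is the simple zero of the extended $E(z,w)=G(z,w)(S(z)-\bar w)$ at the Schwarz-reflected point $z^{\ast}=S^{-1}(\bar w)$.

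Contracting $\partial\Omega$ slightly into $\Omega$ past $z^{\ast}$ reduces the left side of \eqref{intHE} to $\operatorname{Res}_{z^{\ast}}\bigl(H(z,a)/E(z,w)\bigr)$. From $E=G\,(S-\bar w)$ at a simple zero one reads $E'(z^{\ast},w)=G(z^{\ast},w)\,S'(z^{\ast})$, and combining with \eqref{HG}, which gives $H(z^{\ast},a)=G(z^{\ast},a)/(z^{\ast}-\overline{S(a)})$, the residue becomes
\[
\frac{G(z^{\ast},a)}{(z^{\ast}-\overline{S(a)})\,G(z^{\ast},w)\,S'(z^{\ast})}.
\]
I would verify this equals $1/(\bar w-\bar a)$ first in the disk model, where $S(z)=1/z$ makes every factor rational and the identity reduces to routine algebra, and then invoke the line-bundle framework summarized in the passage preceding the lemma: $G(z,a)$ and $G(z,w)$ are both values of the same global antiholomorphic section of the bundle with transition $z-\overline{S(\cdot)}$, so their ratio is intrinsically controlled by the Schwarz geometry. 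For $w$ lying beyond the neighborhood where $S$ extends, the identity propagates to all of $\P\setminus\overline\Omega$ by antianalytic continuation in $w$, both sides being antianalytic there. The main obstacle is the clean execution of the residue computation in the general analytic-boundary case; the line-bundle structure is exactly what makes the coefficient $1/(\bar w-\bar a)$ fall out intrinsically.
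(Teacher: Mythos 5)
Your treatment of \eqref{intHS} is correct and coincides with the paper's own proof: both replace $S_-$ by $\bar z$ modulo a function analytic in $\Omega$, invoke \eqref{HzaG} to convert $H(z,a)(\bar z-\bar a)$ into $-G^*(z,a)$, and finish by pushing the contour to infinity via \eqref{Gasymptotics}.

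Your argument for \eqref{intHE}, however, has genuine gaps. First, the contour deformation is not available in general: the analytic continuation of the integrand into $\Omega$, namely $H(z,a)/\bigl(G(z,w)(S(z)-\bar w)\bigr)$, exists only in the one-sided collar of $\partial\Omega$ where the Schwarz function is defined. Contracting $\partial\Omega$ past $z^*=S^{-1}(\bar w)$ to a curve $\gamma$ yields the residue at $z^*$ \emph{plus} the integral over $\gamma$, and you give no reason for the latter to vanish; $\gamma$ cannot be collapsed further because the continued integrand does not exist deep inside $\Omega$. (In the disk $S(z)=1/z$ is globally meromorphic and the collapse does go through, which is why your check works there.) Second, the identification of the residue with $1/(\bar w-\bar a)$ is verified only for the disk; the appeal to the line-bundle structure is not a proof, and the needed general identity $G(z^*,a)=(z^*-\ol{S(a)})\,G(z^*,w)\,S'(z^*)\,(\bar w-\bar a)$ is essentially as hard as the lemma itself. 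Your route also requires $\partial\Omega$ analytic, which the lemma does not assume. The paper's proof avoids all of this: multiply the left side of \eqref{intHE} by $\bar w-\bar a$ and split $\bar w-\bar a=(\bar w-\bar z)+(\bar z-\bar a)$ on $\partial\Omega$. By \eqref{EzwG} the first piece turns the integrand into $-H(z,a)/G(z,w)$, which is analytic in $\Omega$ (since $G(\cdot,w)$ is zero-free there) and so contributes nothing; by \eqref{HzaG} the second piece becomes $-G^*(z,a)/F(z,w)$, analytic outside $\ol{\Omega}$ and behaving like $1/z$ at infinity, so pushing the contour to $|z|=R$ gives $2\pi\I$. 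This uses only the matching relations on $\partial\Omega$ and no boundary analyticity.
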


\begin{remark}\label{rem:intHS}
Equation (\ref{intHS}) equivalently says that $\frac{1}{\pi}\int_\Omega H(z,a)dA(z)=1$.
\end{remark}

\begin{proof}
Since $S_+(z)-\bar{a}$ is holomorphic in $\Omega$ we have, using (\ref{HzaG}),
$$
\int_{\partial\Omega} H(z,a)S_-(z)dz=\int_{\partial\Omega} H(z,a)(S_-(z)+S_+(z)-\bar{a})dz=
$$
$$
=\int_{\partial\Omega} H(z,a)(\bar{z}-\bar{a})dz=-\int_{\partial\Omega} G^\ast(z,a)dz
=\res_\infty G^\ast (\cdot,a)=2\pi \I,
$$
proving (\ref{intHS}). As for (\ref{intHE}), we multiply with $\bar{w}-\bar{a}$ and compute
$$
(\bar{w}-\bar{a})\int_{\partial\Omega}\frac{H(z,a)dz}{E(z,w)}=\int_{\partial\Omega}\frac{(\bar{w}-\bar{z}+\bar{z}-\bar{a})H(z,a)dz}{E(z,w)}=
$$
$$
=-\int_{\partial\Omega}\frac{(\bar{z}-\bar{w})H(z,a)dz}{F(z,w)}+\int_{\partial\Omega}\frac{(\bar{z}-\bar{a})H(z,a)dz}{F(z,w)}=
$$
$$
=-\int_{\partial\Omega}\frac{H(z,a)dz}{G(z,w)}+\int_{\partial\Omega}\frac{G^\ast(z,a)dz}{F(z,w)}=0+ \int_{|z|=R>>1}\frac{dz}{z}=2\pi \I.
$$
In the last steps,  (\ref{EzwG}), (\ref{HzaG}) and the asymptotic  properties of $G$ and $G^\ast$ in $\Omega$ and $\P\setminus\Omega$, respectively, were used
(see (\ref{Gasymptotics})). The identity obtained is  exactly the desired one, namely (\ref{intHE}).
\end{proof}

\begin{lemma}\label{lem:fS}
For $f\in\mathcal{O}(\P\setminus\Omega)_0$ we have
\begin{equation}\label{fS}
\langle f,S_-\rangle_\mathcal{O} = \res_\infty f . 
\end{equation}
\end{lemma}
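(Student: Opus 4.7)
The plan is to start from the boundary-integral form of the inner product, equation~(\ref{innerproduct2}) specialised to $g = S_-$, and evaluate the double integral by first computing the inner one via Lemma~\ref{lem:intHS}. Writing
$$
\langle f, S_- \rangle_{\mathcal{O}}
= \frac{1}{4\pi^2} \int_{\partial\Omega} \int_{\partial\Omega} H(z,w)\, f(z)\, \overline{S_-(w)}\, dz\, d\bar{w},
$$
I will evaluate the inner integral $I(z) := \int_{\partial\Omega} H(z,w)\,\overline{S_-(w)}\,d\bar{w}$ first, understanding that the $w$-integration is taken in the limit from inside $\Omega$ as required in Section~\ref{sec:boundary}.

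Taking complex conjugates of $I(z)$ and using the Hermitian symmetry $\overline{H(z,w)} = H(w,z)$, I rewrite $\overline{I(z)} = \int_{\partial\Omega} H(w,z)\,S_-(w)\,dw$. This is precisely the quantity computed in Lemma~\ref{lem:intHS}, equation~(\ref{intHS}), once I identify $a = z$ with a point approached from inside $\Omega$; the lemma therefore gives $\overline{I(z)} = 2\pi\I$ and hence $I(z) = -2\pi\I$ independently of $z$. The fact that this value is constant in $z$ is what makes the outer integration immediate. Substituting back,
$$
\langle f, S_- \rangle_{\mathcal{O}}
= \frac{-2\pi\I}{4\pi^2} \int_{\partial\Omega} f(z)\, dz
= \frac{1}{2\pi\I} \int_{\partial\Omega} f(z)\, dz.
$$
Since $f \in \mathcal{O}(\P\setminus\Omega)_0$ is analytic in $\P\setminus\Omega$ and vanishes at infinity, the contour $\partial\Omega$ can be deformed outward to a circle $|z| = R$ of arbitrarily large radius without affecting the integral, reducing the right-hand side to $\frac{1}{2\pi\I} \oint_{|z|=R} f(z)\, dz = \res_\infty f$.

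The only delicate point is the limiting interpretation of the boundary integrals; since the output of Lemma~\ref{lem:intHS} is independent of $a \in \Omega$, the limit as $a$ approaches $\partial\Omega$ from inside poses no difficulty, and the outer deformation into the exterior where $f$ is analytic is standard. An alternative derivation, slightly more invariant, would use the unitary isomorphism ${\rm C}^{\rm ext}$ to rewrite the left-hand side as $\langle [\partial \tilde f / \partial \bar{z}], [1]\rangle_{\mathcal{H}(\Omega)}$ for any smooth extension $\tilde{f}$ of $f$, then use Remark~\ref{rem:intHS} together with Hermitian symmetry to collapse the $w$-integration to a factor of $\pi$, and finally apply Stokes' theorem in $\Omega$ to arrive at the same intermediate identity $\frac{1}{2\pi\I}\int_{\partial\Omega} f\, dz$.
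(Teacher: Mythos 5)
Your route is essentially the paper's: both evaluate the inner $w$-integral in (\ref{innerproduct2}) first, find that it is a constant independent of $z$, and then collapse the outer integral to a single contour integral of $f$. The difference is that you obtain the constant by conjugating and invoking the Hermitian symmetry $\overline{H(z,w)}=H(w,z)$ so as to reuse (\ref{intHS}) verbatim, whereas the paper reruns the analogous boundary manipulation directly: it replaces $\overline{S_-(w)}$ by $\overline{S_+(w)}+\overline{S_-(w)}-z=w-z$ (the added piece being anti-analytic in $w$ in $\Omega$), applies (\ref{HGRH}) to pass to $G(z,w)$, and finishes with the asymptotics (\ref{Gasymptotics}). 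Your shortcut is legitimate and slightly more economical. The structural caveat you flag yourself --- that (\ref{intHS}) is stated for $a\in\Omega$ while you need $a=z$ on the same (approximating) contour as the $w$-integration, where $H$ has a diagonal singularity --- is real but no worse than what the paper's own proof tolerates under the exhaustion convention of Section~\ref{sec:boundary}, and the constancy of (\ref{intHS}) in $a$ makes the limit harmless.

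The one genuine problem is your last equality. With the paper's definition $\res_\infty f=-\frac{1}{2\pi\I}\oint_{|\zeta|=R}f(\zeta)\,d\zeta$, your intermediate value $\frac{1}{2\pi\I}\int_{\partial\Omega}f\,dz$ is the coefficient $a_0$ of $1/z$ in $f$, i.e.\ it equals $-\res_\infty f$, not $\res_\infty f$; so as written your argument proves $\langle f,S_-\rangle_\mathcal{O}=-\res_\infty f$, and the final identification is a sign slip. You should be aware that the paper's own proof lands on the opposite value $-\frac{1}{2\pi\I}\int_{\partial\Omega}f\,dz$ at the corresponding stage, so the two computations disagree by a sign \emph{before} $\res_\infty$ is invoked. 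A unit-disk check supports your intermediate value: there $S_-(z)=1/z=k_0(z)$, so $\langle S_-,S_-\rangle_\mathcal{O}=H(0,0)=1>0$, which equals $\frac{1}{2\pi\I}\oint_{\partial\Omega}S_-\,dz=M_0=1$, whereas $\res_\infty S_-=-1$; positivity of the norm thus forces $\langle f,S_-\rangle_\mathcal{O}=+\frac{1}{2\pi\I}\int_{\partial\Omega}f\,dz$. So the tension is really between your (correct) computation and the sign printed in the statement (a matching sign occurs in Lemma~\ref{lem:commutator}, so the two cancel in the theorem $[{\rm Z}_\mathcal{O},{\rm Z}_\mathcal{O}^*]=S_-\otimes S_-$). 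Either way, you cannot absorb the discrepancy by silently asserting $\frac{1}{2\pi\I}\oint f\,dz=\res_\infty f$: carry the minus sign honestly, and note explicitly that with the paper's normalization of the residue the formula you have actually established is $\langle f,S_-\rangle_\mathcal{O}=-\res_\infty f$.
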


\begin{remark}\label{rem:f1}
The counterpart of (\ref{fS}) in $\mathcal{H}(\Omega)$ is the formula
$$
\langle f,{\bf 1}\rangle_{\mathcal{H}(\Omega)}= \frac{1}{\pi} \int_\Omega f dA,
$$
for $f\in\mathcal{H}(\Omega)$, which follows immediately from the identity in Remark~\ref{rem:intHS}.
\end{remark}

\begin{proof}
Once again we invoke the continuation properties  of $H(z,w)$, this time (\ref{Gasymptotics}), (\ref{HG}). Using also the fact that $\ol{S(w)}_+ -z$ is 
anti-analytic in $\Omega$, as a function of $w$, we obtain
$$
\langle f,S_-\rangle_\mathcal{O}=\frac{1}{4\pi^2}\int_{\partial\Omega}\int_{\partial\Omega}H(z,w)f(z)\overline{S_-(w)} dzd\bar{w}=
$$
$$
=\frac{1}{4\pi^2}\int_{\partial\Omega}\left(\int_{\partial\Omega}H(z,w)\left(\overline{S_+(w)}+\ol{S_-(w)}-z\right) d\bar{w}\right)f(z)dz=
$$
$$
=\frac{1}{4\pi^2}\int_{\partial\Omega}\left( \int_{\partial\Omega}H(z,w)(w-z) d\bar{w}\right)f(z)dz=
$$
$$
=\frac{1}{2\pi\I}\int_{\partial\Omega}(\frac{1}{2\pi\I}\int_{\partial\Omega}G(z,w)d\bar{w})f(z)dz=-\frac{1}{2\pi\I}\int_{\partial\Omega}f(z)dz.
$$
\end{proof}

\begin{lemma}\label{lem:commutator}
For $f\in\mathcal{O}(\P\setminus\Omega)_0$ we have
\begin{equation}\label{commutator}
[{\rm Z}_\mathcal{O}, {\rm Z}_\mathcal{O}^*]f= (\res_\infty f) \cdot S_-.
\end{equation}
\end{lemma}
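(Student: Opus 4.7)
The plan is to carry out an explicit computation of both compositions ${\rm Z}_\mathcal{O}{\rm Z}^*_\mathcal{O} f$ and ${\rm Z}^*_\mathcal{O}{\rm Z}_\mathcal{O} f$, subtract them, and identify the residual term as a multiple of $S_-$ proportional to $\res_\infty f$. The whole argument is bookkeeping with the exterior Cauchy transform $(\cdot)_-$ applied to $z$- and $S$-multiples of boundary data; the only substantive analytic input is Cauchy's theorem.

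First I would compute ${\rm Z}_\mathcal{O}{\rm Z}^*_\mathcal{O} f = (z\,(Sf)_-)_-$. Splitting the boundary function as $Sf = (Sf)_+ + (Sf)_-$ and noting that $z\,(Sf)_+$ is holomorphic in $\Omega$, the exterior Cauchy transform $(z\,(Sf)_+)_-$ vanishes identically by Cauchy's theorem applied to the contour $\partial\Omega$. By linearity of $(\cdot)_-$ I then obtain
\[
{\rm Z}_\mathcal{O}{\rm Z}^*_\mathcal{O} f \;=\; (z\,(Sf)_-)_- \;=\; (zSf)_- \;=\; (|z|^2 f)_-.
\]

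For ${\rm Z}^*_\mathcal{O}{\rm Z}_\mathcal{O} f = (S\,(zf)_-)_-$ I would invoke the identity $(zf)_- = zf + \res_\infty f$ valid in $\P\setminus\overline{\Omega}$, which is precisely (\ref{ZCRes}): the additive constant is exactly what restores vanishing at infinity. Consequently on $\partial\Omega$,
\[
S\cdot(zf)_- \;=\; zSf + (\res_\infty f)\,S,
\]
and linearity of $(\cdot)_-$ combined with $(S)_- = S_-$ from (\ref{Splusminus}) gives
\[
{\rm Z}^*_\mathcal{O}{\rm Z}_\mathcal{O} f \;=\; (|z|^2 f)_- + (\res_\infty f)\,S_-.
\]

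Subtracting the two expressions, the bulk terms $(|z|^2 f)_-$ cancel and one is left with a scalar multiple of $S_-$ proportional to $\res_\infty f$, giving the claimed rank-one commutator after reconciling the overall sign with the paper's conventions for $\res_\infty$ and for the inner product (in particular with Lemma~\ref{lem:fS}). The principal obstacle, in my view, is the vanishing $(z\,(Sf)_+)_-=0$: this is nothing more than Cauchy's theorem for a function admitting analytic continuation into $\Omega$, but it is precisely the step that breaks the apparent symmetry between the two orderings and is the entire reason the commutator is nontrivial. Everything else is routine: the rank-one structure $\res_\infty f \cdot S_-$ emerges automatically from the constant shift $\res_\infty f$ that the exterior normalization $zf\mapsto(zf)_-$ injects into the computation.
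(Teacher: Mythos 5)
Your proposal is correct and follows essentially the same route as the paper's proof: both orderings are computed with the $(\cdot)_-$ calculus, using $(zf)_-=zf+\res_\infty f$ together with the vanishing of the exterior Cauchy transform on boundary data that extend analytically into $\Omega$, so that the residue term survives in only one of the two compositions. The sign you defer to ``the paper's conventions'' is genuinely ambiguous in the source as well --- the paper's own displayed computation likewise yields $-(\res_\infty f)\,S_-$ for ${\rm Z}_\mathcal{O}{\rm Z}_\mathcal{O}^*f-{\rm Z}_\mathcal{O}^*{\rm Z}_\mathcal{O}f$ before stating the lemma without the minus sign --- so your bookkeeping matches the paper's line for line.
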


\begin{proof}
The proof is a straight-forward computation using (\ref{ZCRes}) and (\ref{adjoint}):
$$
{\rm Z}_\mathcal{O}^*{\rm Z}_\mathcal{O}f=(S(z)(zf(z))_-)_-=(S(z)zf(z))_-+(\res_\infty f) \cdot S_-(z),
$$
$$
{\rm Z}_\mathcal{O}{\rm Z}_\mathcal{O}^*f=(z(S(z)f(z))_-)_-=z(S(z)f(z))_- +\res_\infty (Sf)_-
$$
$$
= (zS(z)f(z))_- -\res_\infty (Sf)_- \,  +\res_\infty (Sf)_-  =(zS(z)f(z))_- \,\,.
$$
Thus
$$
[{\rm Z}_\mathcal{O}, {\rm Z}_\mathcal{O}^*]f= -\frac{1}{2\pi \I} \int_{\partial\Omega} f(z)dz \cdot S_- \,.
$$
This is the same as (\ref{commutator}).
\end{proof}

Combining the two lemmas gives $[{\rm Z}_\mathcal{O}, {\rm Z}_\mathcal{O}^*]f= \langle f,S_-\rangle_\mathcal{O}\cdot S_-\,$.
Thus 

\begin{theorem}
$$
[{\rm Z}_\mathcal{O},{\rm Z}_\mathcal{O}^*]=S_-\otimes S_-
$$
\end{theorem}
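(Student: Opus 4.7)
The plan is to combine the two preceding lemmas directly: Lemma~\ref{lem:commutator} computes $[{\rm Z}_\mathcal{O},{\rm Z}_\mathcal{O}^*]f = (\res_\infty f)\cdot S_-$, and Lemma~\ref{lem:fS} identifies the scalar $\res_\infty f$ with the inner product $\langle f, S_-\rangle_\mathcal{O}$. Substituting one into the other yields
$$
[{\rm Z}_\mathcal{O},{\rm Z}_\mathcal{O}^*]f = \langle f, S_-\rangle_\mathcal{O}\cdot S_-,
$$
which is precisely the action of the rank-one operator $S_-\otimes S_-$ on $f\in \mathcal{O}(\P\setminus\Omega)_0$. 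By density of $\mathcal{O}(\P\setminus\Omega)_0$ in the completion $\overline{\mathcal{O}}(\P\setminus\Omega)_0$, together with the continuity of both sides of the identity (the left side as a bounded operator on the Hilbert space, the right side as a continuous rank-one operator in $S_-$, which belongs to the space by the Cauchy transform construction), the identity extends to the whole space.

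There is essentially no obstacle here, since the hard analytic work has already been isolated into Lemmas~\ref{lem:intHS}, \ref{lem:fS}, and \ref{lem:commutator}. The one point worth emphasizing is the logical flow: both lemmas rely on the continuation properties of $H(z,w)$ summarized in the relations (\ref{HGRH})--(\ref{EzwG}) and on the asymptotics (\ref{Gasymptotics}), so the theorem inherits its validity from these structural identities of the exponential transform rather than from any separate computation. In particular, no additional integration by parts or contour deformation is needed beyond what was performed in the two lemmas.

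As a brief sanity check, one may verify the theorem on the canonical vector $f = S_-$ itself: Lemma~\ref{lem:commutator} gives $[{\rm Z}_\mathcal{O},{\rm Z}_\mathcal{O}^*]S_- = (\res_\infty S_-)\cdot S_- = M_0\cdot S_-$, while $(S_-\otimes S_-)S_- = \|S_-\|_\mathcal{O}^2 \cdot S_-$; consistency of the two formulas thus predicts $\|S_-\|_\mathcal{O}^2 = M_0 = \frac{1}{\pi}\mathrm{Area}(\Omega)$, which matches the interpretation (via Example~\ref{ex:1} and Remark~\ref{rem:f1}) of $S_-$ as the Cauchy transform of the unit element ${\bf 1}\in\mathcal{H}(\Omega)$ whose squared norm is $\frac{1}{\pi}\int_\Omega dA$. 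This provides a reassuring cross-check before writing out the very short formal proof.
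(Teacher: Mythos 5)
Your argument is correct and essentially identical to the paper's own proof, which likewise obtains $[{\rm Z}_\mathcal{O},{\rm Z}_\mathcal{O}^*]f=\langle f,S_-\rangle_\mathcal{O}\cdot S_-$ by substituting Lemma~\ref{lem:fS} into Lemma~\ref{lem:commutator} (your density remark only makes explicit what the paper leaves tacit). One small caveat on the sanity check: with the stated convention $\res_\infty f=-\frac{1}{2\pi\I}\oint f\,d\zeta$ and $S_-(z)=M_0/z+\cdots$ one gets $\res_\infty S_-=-M_0$, so the signs in the two lemmas as printed must compensate; your conclusion $\|S_-\|_\mathcal{O}^2=M_0=\frac{1}{\pi}\,\mathrm{Area}(\Omega)$ is nevertheless the correct one.
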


%%%%%%%%%%%%%%%%%%%%%%%%%%%%%%%%%%%%%%%%%%

\subsection{The reproducing kernel}\label{sec:kernel}

As it has been indicated before,  point evaluations are continuous linear functionals on $\overline{\mathcal{O}}(\P\setminus \Omega)_0$, hence that space has a
reproducing kernel, which we now make explicit.

\begin{theorem}\label{thm:reproducing}
For any $w\in\P\setminus\ol{\Omega}$ the function 
\begin{equation}\label{reproducing}
L_w(z)=L(z,w)= \frac{1}{E(z,w)}-1 \quad (z\in\P\setminus\ol{\Omega})
\end{equation}
is in $\overline{\mathcal{O}}(\P\setminus \Omega)_0$ and has the reproducing property
$$
f(w)=\langle f, L_w\rangle_{\mathcal{O}}, \quad f\in \overline{\mathcal{O}}(\P\setminus \Omega)_0.
$$
\end{theorem}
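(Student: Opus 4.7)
The plan is to prove the reproducing identity by reducing $\langle f,L_w\rangle_{\mathcal{O}}$ directly to the exterior Cauchy integral that computes $f(w)$. I would first verify that $L_w\in\overline{\mathcal{O}}(\P\setminus\Omega)_0$: the exponential transform $E(z,w)$ is analytic in $z\in\P\setminus\overline{\Omega}$ and bounded away from zero there (by (\ref{E}), since $\rho$ has compact support), while $E(z,w)\to 1$ as $z\to\infty$, so $L_w=1/E(\cdot,w)-1$ is analytic outside $\overline{\Omega}$ and vanishes at infinity. The classical Cauchy formula then gives, for $w\in\P\setminus\overline{\Omega}$,
$$
f(w)=-\frac{1}{2\pi\I}\int_{\partial\Omega}\frac{f(z)}{z-w}\,dz,
$$
the integral over an infinite circle being negligible for $f\in\overline{\mathcal{O}}(\P\setminus\Omega)_0$. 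The task becomes showing that $\langle f,L_w\rangle_{\mathcal{O}}$ equals this right-hand side.

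The core computation substitutes $\overline{L_w(z')}=1/E(w,z')-1$ into the boundary formula (\ref{innerproduct2}), exchanges the order of integration, and evaluates the inner integral
$$
M(z,w)=\int_{\partial\Omega}H(z,z')\,\overline{L_w(z')}\,d\bar{z}'.
$$
Here I would exploit the conjugation symmetries $H(z,z')=\overline{H(z',z)}$ (immediate from $H=E/((z-w)(\bar z-\bar w))$ combined with $\overline{E(z',z)}=E(z,z')$, which follows from (\ref{E})) and $1/E(w,z')=\overline{1/E(z',w)}$ to rewrite the first piece as
$$
\int_{\partial\Omega}\frac{H(z,z')\,d\bar{z}'}{E(w,z')}=\overline{\int_{\partial\Omega}\frac{H(z',z)\,dz'}{E(z',w)}}.
$$
Identity (\ref{intHE}) of Lemma~\ref{lem:intHS}, applied with the integration variable $z'$ and parameter $a=z\in\Omega$, evaluates the right-hand side to $\overline{2\pi\I/(\bar w-\bar z)}=2\pi\I/(z-w)$. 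For the $-1$ piece, $H(z',z)$ is analytic in $z'\in\Omega$ (the apparent diagonal singularity is removed by $E(z,w)\sim|z-w|^2$), so Cauchy's theorem gives $\int_{\partial\Omega}H(z',z)\,dz'=0$ and therefore $\int_{\partial\Omega}H(z,z')\,d\bar z'=0$ by conjugation. Combining, $M(z,w)=2\pi\I/(z-w)$, and inserting back yields
$$
\langle f,L_w\rangle_{\mathcal{O}}=\frac{1}{4\pi^2}\int_{\partial\Omega}f(z)\cdot\frac{2\pi\I}{z-w}\,dz=-\frac{1}{2\pi\I}\int_{\partial\Omega}\frac{f(z)}{z-w}\,dz=f(w).
$$

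The principal obstacle is the boundary singularity of $H$ on the diagonal of $\partial\Omega\times\partial\Omega$ and the fact that Lemma~\ref{lem:intHS} is stated only for $a$ strictly inside $\Omega$. Following Section~\ref{sec:boundary}, I would carry out the entire chain of identities with $\partial\Omega$ replaced by $\partial\Omega_\varepsilon$ for a smooth interior exhaustion $\Omega_\varepsilon\subset\Omega$, where every kernel is smooth on $\partial\Omega_\varepsilon\times\partial\Omega_\varepsilon$, Fubini applies immediately, and both the cited lemma and Cauchy's theorem are directly available. Passing $\varepsilon\to 0$ is then a continuity argument: $L_w$ is analytic in a full neighborhood of $\overline{\P\setminus\Omega}$, the Cauchy integral depends continuously on its contour, and the extension properties of $H$ and $E$ ensure that the identity $M_\varepsilon(z,w)=2\pi\I/(z-w)$ transports to $\partial\Omega$ in the limit.
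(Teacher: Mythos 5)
Your computation rests on exactly the same two ingredients as the paper's proof --- identity (\ref{intHE}) of Lemma~\ref{lem:intHS} together with the hermitian symmetries $H(z,w)=\overline{H(w,z)}$ and $E(z,w)=\overline{E(w,z)}$ --- and the algebra, including the final sign and the exterior Cauchy formula, is correct. The difference is one of ordering. The paper first restricts to the dense family $k_a(z)=(z-a)^{-1}$ with $a\in\Omega$ and integrates in the $f$-slot first: since $H(\cdot,v)$ is analytic in $\Omega$, Cauchy's formula collapses that integral to $2\pi\I\,H(a,v)$ with the parameter $a$ sitting \emph{strictly inside} $\Omega$, after which (\ref{intHE}) applies verbatim. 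You keep $f$ general and integrate the $L_w$-slot first, which forces the parameter of Lemma~\ref{lem:intHS} onto the (approximating) boundary curve itself.

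That reordering is where the one genuine gap sits. You need $M(z,w)=\int_{\partial\Omega_\varepsilon}H(z,z')\,\overline{L_w(z')}\,d\bar z'$ to equal $2\pi\I/(z-w)$, up to an error that is small \emph{uniformly} for $z\in\partial\Omega_\varepsilon$; but Lemma~\ref{lem:intHS} is an identity for the contour $\partial\Omega$ only, its proof relying on the boundary relations (\ref{HzaG}), (\ref{EzwG}), i.e. on $\bar z=S(z)$ holding on $\partial\Omega$. One cannot simply deform the contour: $1/E(z',w)$ is \emph{not} analytic in $z'$ inside $\Omega$ (there $\partial E/\partial\bar z'=G\neq 0$), so the integrals over $\partial\Omega_\varepsilon$ and $\partial\Omega$ differ by an area term over $\Omega\setminus\Omega_\varepsilon$, and this error has to be controlled uniformly in $z\in\partial\Omega_\varepsilon$ precisely as $H(z,\cdot)$ develops its diagonal singularity at the boundary. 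That is more than the ``continuity argument'' you invoke. The cleanest repair is to adopt the paper's ordering, or equivalently its reduction to the kernels $k_a$: this keeps the lemma's parameter interior, makes every step exact at finite $\varepsilon$, and your identification $M(z,w)=2\pi\I/(z-w)$ together with the concluding exterior Cauchy integral then goes through unchanged.
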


\begin{proof}
It is enough to consider test functions of the form 
\begin{equation}\label{fza}
f(z)=k_a(z)=\frac{1}{z-a}
\end{equation}
with $a\in \Omega$, since linear combinations of these are dense in $\overline{\mathcal{O}}(\P\setminus \Omega)_0$.
We remark in passing that the correlation kernel for such functions is exactly $H$:
$$
\langle \frac{1}{z-a}, \frac{1}{z-b}\rangle_\mathcal{O} = H(a,b), \ \ a,b \in \Omega.
$$
This corresponds to the inner product in $\mathcal{H}(\Omega)$ of Dirac masses at $a$ and $b$.

Now, with $f$ of the form (\ref{fza}) we need to show that
\begin{equation}\label{repr}
\langle \frac{1}{z-a}, \frac{1}{E(z.w)}-1\rangle_\mathcal{O}=\frac{1}{w-a}.
\end{equation}
The proof of this is an easy consequence of (\ref{intHE}) in Lemma~\ref{lem:intHS}:
$$
\langle \frac{1}{z-a}, \frac{1}{E(z.w)}-1\rangle_\mathcal{O}=
$$
$$
=\frac{1}{4\pi^2}  \int_{\partial\Omega}\int_{\partial\Omega}H(u,v)\frac{du}{u-a}\, \ol{(\frac{1}{E(v,w)}-1)}\,d\bar{v}=
$$
$$
 ={\frac{2\pi \I}{4\pi^2} \int_{\partial\Omega}H(a,v)\frac{d\bar{v}}{\ol{E(v,w)}})}=\ol{\frac{1}{2\pi\I}\int_{\partial\Omega}\frac{H(v,a)dv}{E(v,w)}}=\frac{1}{{w}-{a}}.
$$
\end{proof}

%%%%%%%%%%%%%%%%%%%%%%%%%%%%%%%%%%%%%%%%

\section{On the nature of the functional model}\label{sec:nature}

\subsection{Characterization of null elements in $\mathcal{H}(\Omega)$}

The above discussions give a possibility to better understand the space $\mathcal{H}(\Omega)$.
Initially this space was built from test functions or distributions in $\Omega$, but 
the equivalence classes when passing to the appropriate quotient space are very big, so it is difficult to build
a function theoretic intuition in this setting. 

To illustrate how far $\mathcal{H}(\Omega)$ is from a function space, consider for example the case $\Omega$ is the unit disk.
In this case the function $f(z)=z$ is the zero element in $\mathcal{H}(\Omega)$, i.e. $\|z\|=0$. 
Even worse, if one multiplies this zero function with $\bar{z}$ it becomes $\bar{z}\cdot z$, which is not zero
in $\mathcal{H}(\Omega)$: $[z]=0$ but $[\bar{z}\cdot z]\ne 0$.
Thus multiplication by $\bar{z}$ is not a continuous operator in $\mathcal{H}(\Omega)$. In fact, it even does not make sense in that quotient space.
As another example:  if $\Omega$ is
an ellipse in standard position then, as elements in $\mathcal{H}(\Omega)$,  $\bar{z}=az$ for some constant $a$ (see Example~\ref{ex:ellipse}  below).

Basically what can now be understood is that it is natural to think of the elements of $\mathcal{H}(\Omega)$ as generators (sources, sinks, etc.)
of  (physical) fields. These generators are then located in $\Omega$ (or its closure), while the fields themselves live in the complement
$\P\setminus \Omega$, and they are more exactly defined as the Cauchy transforms of the sources. A huge amount of different generators
can in this way give rise to the same external field, and this explains the big equivalence classes when forming $\mathcal{H}(\Omega)$.
See Section~\ref{sec:fluid} for some details on a fluid dynamic interpretation. 
When these sources (etc.) are moved to the boundary they however become unique representatives for their elements in $\mathcal{H}(\Omega)$,
as indicated in Section~\ref{sec:boundary}.

The following theorem is essentially implicit in what has already been said, but we state it separately for clarity. 
We do not know of any characterization of what the most general object in $\mathcal{H}(\Omega)$ looks like, so we focus only
on elements which can be represented by distributions $\mu$ in  $\mathcal{H}(\Omega)$
(strictly speaking $[\mu]\in\mathcal{H}(\Omega)$), for which the norm $\|\mu\|$ makes immediate sense. More precisely, we assume that  $\mu$ either is a distribution with
compact support in $\Omega$, or that $d\mu= gdA$ for some function $g\in L^\infty (\Omega)$.

\begin{theorem}\label{thm:zeronorm}
With $\mu\in\mathcal{H}(\Omega)$ as above, the following statements are equivalent.
\begin{itemize}
\item[$(i)$]\quad
$\|\mu\|_{\mathcal{H}(\Omega)}=0$. 
\item[$(ii)$]\quad
${\rm C}^{\rm ext}[\mu]=0$.
\item[$(iii)$]\quad
$\mu \text{ annihilates all analytic functions in } \Omega$.
\item[$(iv)$] \quad
$\mu={\partial f}/{\partial \bar{z}} \text{ for some function } f  \text{ which vanishes on } \P\setminus{\Omega}$.
\end{itemize}
\end{theorem}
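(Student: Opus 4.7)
My strategy is to establish the cycle
$$
(i)\Longleftrightarrow (ii)\Longrightarrow (iv)\Longrightarrow (iii)\Longrightarrow (ii),
$$
leaning on the unitary correspondence ${\rm C}^{\rm ext}\colon \mathcal{H}(\Omega)\to\overline{\mathcal{O}}(\P\setminus\Omega)_0$ of Section~\ref{sec:model} together with the standard distributional identity $\partial {\rm C}[\mu]/\partial \bar z=\mu$. The equivalence $(i)\Leftrightarrow(ii)$ is then immediate: ${\rm C}^{\rm ext}$ is norm preserving, and $\overline{\mathcal{O}}(\P\setminus\Omega)_0$ is a genuine function space of analytic functions on $\P\setminus\overline{\Omega}$ with continuous point evaluations (Theorem~\ref{thm:reproducing}), so vanishing norm forces ${\rm C}^{\rm ext}[\mu]\equiv 0$ on $\P\setminus\overline{\Omega}$.

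For $(ii)\Rightarrow (iv)$, the natural witness is $f={\rm C}[\mu]$, the global Cauchy transform on $\P$: it vanishes at infinity, satisfies $\partial f/\partial \bar z=\mu$ distributionally on $\C$, and by hypothesis $f|_{\P\setminus\Omega}={\rm C}^{\rm ext}[\mu]\equiv 0$. Conversely, $(iii)\Rightarrow (ii)$ is seen by testing $\mu$ against the Cauchy kernel $\zeta\mapsto 1/(\zeta-z)$ with $z\in\P\setminus\overline{\Omega}$, which is analytic in a full neighborhood of $\overline{\Omega}$, yielding
$$
{\rm C}^{\rm ext}[\mu](z)=-\frac{1}{\pi}\Bigl\langle \mu,\frac{1}{\,\cdot\, -z}\Bigr\rangle =0
$$
directly from (iii).

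The remaining step $(iv)\Rightarrow (iii)$ is a Stokes/Green integration by parts. For $g$ analytic in an open neighborhood $U\supset\overline{\Omega}$ and $f$ as in (iv), one picks an intermediate open $\Omega'$ with $\overline{\Omega}\subset\Omega'\subset \overline{\Omega'}\subset U$, so that $f$ vanishes on $\partial\Omega'$, and computes
$$
\langle \mu,g\rangle=\int_{\Omega'} g\,\frac{\partial f}{\partial \bar z}\,dA=-\int_{\Omega'} f\,\frac{\partial g}{\partial \bar z}\,dA+\text{(b.t.)}=0,
$$
since $\partial g/\partial \bar z=0$ and the boundary term vanishes with $f|_{\partial\Omega'}=0$. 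The principal obstacle is making this argument clean uniformly in the two admissible regimes for $\mu$: for a compactly supported distribution in $\Omega$, the pairing and integration by parts are to be read distributionally, with the comfort that $f={\rm C}[\mu]$ is smooth off $\mathrm{supp}\,\mu$; for $d\mu=g\,dA$ with $g\in L^\infty(\Omega)$, the Cauchy transform $f$ is continuous (in fact Hölder) on $\C$ and Green's formula applies classically. A secondary point to pin down is the correct test class underlying (iii)---germs of analytic functions on $\overline{\Omega}$ is a clean choice, well suited both to the integration by parts above and to the Cauchy kernels used in $(iii)\Rightarrow (ii)$.
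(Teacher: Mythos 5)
Your proof is correct and uses the same ingredients as the paper's (the Cauchy--transform isometry, testing against the kernels $k_a(z)=(z-a)^{-1}$, and a Stokes integration by parts), but the cycle is organized differently. The paper runs $(i)\Rightarrow(ii)\Rightarrow(iii)\Rightarrow(iv)\Rightarrow(i)$: it gets $(i)\Rightarrow(ii)$ from the formula ${\rm C}^{\rm ext}[\mu](a)=\langle k_a\mu,\mathbf 1\rangle_{\mathcal{H}(\Omega)}$ together with the (merely asserted) boundedness of multiplication by $k_a$ on $\mathcal{H}(\Omega)$ for $a\in\P\setminus\overline{\Omega}$, and it closes the loop with $(iv)\Rightarrow(i)$ by integrating $\partial f/\partial\bar z$ against the particular analytic function $H(\cdot,w)$ and concluding $\|\mu\|^2=(\mu\otimes\bar\mu)(H)=0$. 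You instead obtain $(i)\Leftrightarrow(ii)$ in one stroke from the isometry of ${\rm C}^{\rm ext}$ plus the continuity of point evaluations (Theorem~\ref{thm:reproducing}), which is arguably cleaner since both facts are proved elsewhere in the paper; and your $(iv)\Rightarrow(iii)$ is the same integration by parts as the paper's $(iv)\Rightarrow(i)$, only against a general analytic $g$ rather than $H(\cdot,w)$. A genuine gain of your arrangement is that you bypass the paper's $(ii)\Rightarrow(iii)$ step, which rests on an unproved density claim (that the $k_a$, $a\in\P\setminus\overline{\Omega}$, span the relevant class of analytic functions in $\Omega$).

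Two caveats. First, you read $(iii)$ with test class the germs of analytic functions on $\overline{\Omega}$, whereas the Remark following the theorem indicates a larger intended class (e.g.\ all analytic $f\in L^1(\Omega)$ when $d\mu=g\,dA$); with that larger class your $(iv)\Rightarrow(iii)$ would need an additional approximation step, since the integration by parts requires $g$ to be defined up to $\partial\Omega'$. You therefore prove the equivalence for the weaker form of $(iii)$ --- but the paper's own $(ii)\Rightarrow(iii)$ suffers from exactly the same issue, so this is not a new gap. Second, you use the letter $g$ both for the analytic test function and for the density of $\mu$; harmless, but worth renaming.
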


\begin{remark}
$(iii)$ means, in the in case $d\mu =g\,dA$, $g\in L^\infty (\Omega)$, for example, that $\int_\Omega fg\,dA=0$ for all analytic $f$ in $L^1(\Omega)$.
The function (or rather distribution) $f$ in $(iv)$ need not be very regular in $\Omega$ (if $\mu$ is just a distribution), but near $\partial\Omega$ it will at least be a 
continuous function, hence it makes good sense to say that it vanishes on $\P\setminus{\Omega}$.
\end{remark}

On setting $k_a(z)=({z-a})^{-1}$, as in (\ref{fza}),        
the Cauchy transform of $\mu \in\mathcal{H}(\Omega)$ can be written (recall Remark~\ref{rem:f1})
\begin{equation}\label{cauchy0}
{\rm C}^{\rm ext}[\mu](a)
=\langle k_a \mu,\mathbf{1} \rangle_{\mathcal{H}(\Omega)}.
\end{equation}

\begin{proof}

$(i)\Rightarrow (ii)$:
If $[\mu]=0$ in $\mathcal{H}(\Omega)$ then also $[k_a\,\mu]=0$ in $\mathcal{H}(\Omega)$, for $a\in \P\setminus\overline{\Omega}$,  because multiplication
with $k_a$ is a bounded operator on $\mathcal{H}(\Omega)$ when $a\in\P\setminus\overline{\Omega}$. 
Hence
$$
{\rm C}^{\rm ext}[\mu] (z)=\langle k_a\,\mu,\mathbf{1} \rangle=0
$$ 
for $z\in \P\setminus\overline{\Omega}$, and by continuity up to $\partial\Omega$.

$(ii)\Rightarrow (iii)$:
This is immediate since the kernels $k_a$, with $a\in \P\setminus\overline{\Omega}$,  span the relevant class of  analytic functions in $\Omega$. 

$(iii)\Rightarrow (ii)$: Choose the analytic function to be $k_a$, $a\in \P\setminus\overline{\Omega}$.

$(iii)\Rightarrow (iv)$: Choose $f={\rm C}^{\rm ext}[\mu]$.

$(iv)\Rightarrow (i)$:
If $\mu={\partial f}/{\partial \bar{z}}$, $f=0$ on $\partial\Omega$, then for fixed $w\in\Omega$
$$
\mu(H(\cdot, w))= \int_\Omega H(z,w)\,\frac{\partial f}{\partial \bar{z}}\,dA(z)=0,
$$
by partial integration and where the left member denotes the action of $\mu$ on $H(\cdot,w)$ as a distribution. 
Notice that $H(\cdot,w)$ is regular up to $\partial\Omega$ for fixed $w\in\Omega$. 
It next follows by iterated integration that
$$
\|\mu\|^2=(\mu\otimes\mu) (H)=0.
$$

\end{proof}

\begin{corollary}
$$
\|\frac{\partial}{\partial \bar{z}}\frac{1}{H(z,z)}\|_{\mathcal{H}(\Omega)}=0.
$$
\end{corollary}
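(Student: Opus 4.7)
The plan is to invoke the implication $(iv)\Rightarrow (i)$ of Theorem~\ref{thm:zeronorm} with the explicit witness
$$
f(z) = \begin{cases}\dfrac{1}{H(z,z)}, & z\in\Omega,\\ 0, & z\in\P\setminus\Omega.\end{cases}
$$
Provided this $f$ extends continuously across $\partial\Omega$---i.e.\ provided $1/H(z,z)\to 0$ as $z\to\partial\Omega$ from inside---its distributional $\bar\partial$-derivative on all of $\P$ carries no boundary mass and coincides with $\partial/\partial\bar z\,(1/H(z,z))$ in $\Omega$, vanishing outside. Theorem~\ref{thm:zeronorm} then gives the conclusion $\|\partial_{\bar z}(1/H(z,z))\|_{\mathcal{H}(\Omega)}=0$ at once.

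\textbf{Key step.} The whole argument thus reduces to showing $H(z,z)\to+\infty$ as $z\to\partial\Omega$. I would read this off the Riemann--Hilbert identity (\ref{HG}): in the analytic-boundary case
$$
H(z,w)\,(z-\overline{S(w)}) = G(z,w), \qquad z\in\Omega,\; w\text{ near }\partial\Omega.
$$
Specializing to the diagonal $w=z$ and letting $z$ approach $\partial\Omega$, the prefactor $z-\overline{S(z)}$ vanishes linearly with the distance to the boundary (since $\overline{S(z)}=z$ on $\partial\Omega$), while $G(z,z)$ stays bounded and bounded away from $0$, as one sees from the normalization (\ref{Gasymptotics}) together with the matching relations (\ref{HzaG})--(\ref{EzwG}). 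As a sanity check, for $\Omega=\D$ one computes $G(z,w)\equiv -1/\bar w$, hence $|G(z,z)|=1$ on $\partial\D$, and $H(z,z)=1/(1-|z|^2)\to\infty$ as expected.

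\textbf{Main obstacle.} The one delicate ingredient is the uniform control of $G(z,z)$ on $\partial\Omega$: when $\partial\Omega$ is analytic, all four kernels $F,G,G^\ast,H$ extend across the boundary as sections of the line bundle with transition function $z-\overline{S(w)}$, and the non-vanishing of $G$ on the diagonal follows algebraically. For the general Lipschitz boundaries assumed in the paper one has to exhaust $\Omega$ from inside by smoothly bounded subdomains $\Omega_n\nearrow\Omega$, apply the analytic-boundary argument to each, and pass to the limit---or, alternatively, carry out a direct asymptotic analysis of the double Cauchy integral defining $E(z,w)$ near $\partial\Omega$ to extract the logarithmic singularity of $\psi(z,z)=-\log H(z,z)$. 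Once the boundary vanishing of $1/H(z,z)$ is secured, the extension $f$ is continuous on $\P$, its classical and distributional $\bar\partial$-derivatives in $\Omega$ agree with no jump across $\partial\Omega$, condition $(iv)$ is directly met, and Theorem~\ref{thm:zeronorm}$(iv)\Rightarrow(i)$ closes the proof.
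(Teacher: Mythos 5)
Your proof is correct and follows essentially the same route as the paper: both invoke Theorem~\ref{thm:zeronorm}$(iv)\Rightarrow(i)$ with the witness $f=1/H(z,z)$ extended by zero, the only substantive point being that $1/H(z,z)$ vanishes on $\partial\Omega$. The paper simply asserts this boundary behavior in one line (writing, apparently by a slip, ``$H(z,z)=0$ on $\partial\Omega$'' where $1/H(z,z)=0$, i.e.\ $H(z,z)\to+\infty$, is meant), whereas you justify it via the Riemann--Hilbert relation (\ref{HG}) on the diagonal; that extra detail is consistent with the paper and strengthens rather than alters the argument.
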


\begin{proof}
This is clear from $(iv)$ in the theorem since $H(z,z)>0$ in $\Omega$, $H(z,z)=0$ on $\partial\Omega$.
\end{proof}

\begin{example}\label{ex:ellipse}
For the standard ellipse with half axes $a>b>0$ we have (see p. 97 in \cite{Gustafsson-Putinar-2017})
$$
H(z,w)=\frac{C}{4a^2b^2 + (a^2-b^2)(z^2+\bar{w}^2) -2(a^2+b^2)z\bar{w}},
$$
where $C=C(a,b)>0$ is a constant. Then
$$
\frac{\partial}{\partial \bar{z}}\frac{C}{H(z,z)}=2(a^2-b^2)\bar{z}-2(a^2+b^2)z,
$$
hence the relation
$$
(a^2-b^2)\bar{z}=(a^2+b^2)z
$$
holds in $\mathcal{H}(\Omega)$.
\end{example}

\subsection{A fluid dynamic interpretation of the functional space}\label{sec:fluid}

We may think of complex valued functions in $\P$ as representing  fluid velocity fields. Dividing into
real and imaginary parts it is then customary to let  $f=u+\I v$ represent the vector field
$$
{\bf v}= u \frac{\partial}{\partial x} -v \frac{\partial}{\partial y},
$$
or, in a dual picture, the $1$-form $\nu= u dx -v dy$. Then the derivative
$$
2\frac{\partial f}{\partial \bar{z}}= \frac{\partial u}{\partial x}+\frac{\partial (-v)}{\partial y}- \I \,(\frac{\partial (- v)}{\partial x}-\frac{\partial u}{\partial y})
$$
can be identified with the (complex-valued) vector field
$$
 {\rm div}\, {\bf v} - \I \,{\rm curl} \, {\bf v},
$$
or with the $2$-form $d*\nu -\I \,d\nu$.
We see in particular that $f$ is analytic if and only if the flow is divergence and curl free, i.e.  incompressible and without vorticity. 

Now we consider smooth complex-valued functions, denoted $\tilde{f}$ in our previous contexts, and vanishing at infinity. We assume that the
corresponding flow is incompressible and free of vorticity outside the closure of $\Omega$. Thus $f=\tilde{f}|_{\P\setminus\Omega}$
is analytic there, say for simplicity in a full neighborhood of $\P\setminus\Omega$, and it is the exterior Cauchy transform of $\partial \tilde{f}/\partial \bar{z}$.

If $\tilde{f}$ is not identically zero in $\P\setminus\Omega$, then there must be sources/sinks and/or vorticity present somewhere, and clearly this must be in $\Omega$.
But many different configurations of sources/sinks and vorticity in $\Omega$ can generate the same flow field  outside $\Omega$. And this
redundancy exactly corresponds  to the big equivalence classes for the space $\mathcal{H}(\Omega)$, i.e. that space can be thought of as
the space of generators of exterior flow fields.  The essence of Section~\ref{sec:boundary} is on the other hand that sources/sinks and vorticity can be pushed to the 
boundary, and then they become unique representatives for the flow.
\bigskip

{\bf Acknowledgement.} The authors thank the referee for pertinent and constructive observations which led to an improvement of an earlier version of this article.

%%%%%%%%%%%%%%%%%%%%%%%%%%%%%%%%%%%%%%

%%%%%%%%%%%%%%%%%%%%%%%%%%%%%%%%%%%%%

%%%%%%%%%%%%%%%%%%%%%%%%%%%%%%%%%
%%%%%%%%%%%%%%%%%%%%%%%%%%%%%%%%%%

\bibliography{bibliography_gbjorn.bib}

\newpage
{\it Errata for \cite{Gustafsson-Putinar-2017}}
\bigskip

Since the recent notes \cite{Gustafsson-Putinar-2017} are a natural parent of the present article, we list below a few corrections:

\begin{itemize}

\item p.20, equation (2.35): The variables in the integral in the right member are incorrect. Equation (2.35) should read
$$
G^\ast(z,w)=\frac{1}{\pi} \int_\Omega H(u,w)\frac{dA(u)}{u-z}, \quad z\in \Omega^e, \,\,w\in\Omega.
$$

\item Section~3.3, beginning with equations (3.12), (3.13): The operators $\bar{\rm Z}$ and ${\rm C}$ are introduced and it is said that 
$\bar{\rm Z}+{\rm C}={\rm Z}^*$, see equation (3.14) (in \cite{Gustafsson-Putinar-2017}).
However, $\bar{\rm Z}$ and ${\rm C}$ do not make independent sense as operators in that Hilbert space (only the sum does).

The problem is that $\mathcal{H}(\Omega)$ is a quotient space, and $\bar{\rm Z}$, ${\rm C}$ make sense when acting on representatives
for equivalence classes, but the result depends on which representatives are chosen. 
Since almost all essential statements in \cite{Gustafsson-Putinar-2017} are made in terms of the combination $\bar{\rm Z}+{\rm C}$ the mistake has limited consequences. 

\item p.93: The orthonormal vectors should be
$$
e_{nk}=(k+1)z^n\bar{z}^k.
$$
Thus an ON-basis for $\mathcal{H}(\D)$ is
$$
\{e_{00}, e_{01}, e_{02}, \dots\}=\{(k+1)\bar{z}^k: k=0,1,2,\dots\}.
$$

\item p.98, line 3 from below: ${\rm T}^\ast_n$ should be  ${\rm T}_n$.

\end{itemize}

\end{document}